\theoremstyle{plain}
\newtheorem{theorem}{Theorem}[section]
\newtheorem{lemma}[theorem]{Lemma}
\newtheorem{proposition}[theorem]{Proposition}
\newtheorem{corollary}[theorem]{Corollary}
\theoremstyle{definition}
\newtheorem{definition}[theorem]{Definition}
\newtheorem{remark}[theorem]{Remark}
\newtheorem{example}[theorem]{Example}
\numberwithin{equation}{section}
\newcommand \ww{\omega}
\newcommand\bQ{{\mathbb Q}}
\newcommand\CC{{\mathbb C}}
\newcommand\KK{{\mathbb{L}}}
\newcommand\K{{\mathbb K}}
\newcommand\RR{{\mathbb R}}
\newcommand\ZZ{{\mathbb Z}}
\newcommand\NN{{\mathbb N}}
\newcommand \ff {\mathbf{{f}}}
\newcommand\cO{{\mathcal O}}
\newcommand\cT{{\mathcal T}}
\newcommand\fm{{\mathfrak m}}
\newcommand\im{\operatorname{im}}
\newcommand\vol{\operatorname{vol}}
\newcommand\hot{\operatorname{h.\!o.\!t.}}
\newcommand\trop[1]{\mathcal{T}(#1)}
\newcommand{\MV}{\rm{MV}}
\newcommand{\face}{\operatorname{face}}
\newcommand\init{\operatorname{in}}
\newcommand\cl{\operatorname{cl}}
\newcommand \eep {\varepsilon}
\newcommand \val {\operatorname{val}}
\newcommand \conv {\operatorname{conv}}
\newcommand \Jac{\operatorname{Jac}}
\def\<{\langle}
\def\>{\rangle}
\title[Curve valuations and mixed volumes in sparse implicitization]{
Curve valuations and mixed volumes in the
implicitization of rational varieties}
\author[A.~Dickenstein, M.I. Herrero and B.~Mourrain] {
Alicia Dickenstein, Mar\'ia Isabel Herrero and Bernard Mourrain}
\date{\today}
\address{Departamento de Matem\'atica\\
FCEN, Universidad de Buenos Aires e IMAS (UBA--CONICET)\\
Ciudad Universitaria, Pab.I \\
1428 Buenos Aires, Argentina} \email{alidick@dm.uba.ar}
\email{iherrero@dm.uba.ar}
\address{INRIA Sophia Antipolis M\'editerran\'ee\\
AROMATH \\
2004 route des Lucioles, B.P. 93, \\
06902 SOPHIA ANTIPOLIS, FRANCE}
\email{Bernard.Mourrain@inria.fr}
\subjclass[2010]{14M25, 14T05, 68W30}
\keywords{implicitization, Newton polytope,
  tropical geometry, generalized Puiseux series}
  \thanks{MIH and AD were supported by ANPCyT PICT 2016-0398,
Argentina. AD was also partially supported by UBACYT 20020170100048BA, 
CONICET PIP 11220150100473, Argentina.}
\newcounter{example2}[section]
\newenvironment{example2}[1][]{
\par\medskip \noindent \textbf{Example~\ref{ex:Isa}, continuation.}
\rmfamily}{\medskip}
\newcounter{Hensel}[section]
\begin{document}

\begin{abstract}
  We address the description of the tropicalization of families of
  rational varieties under parametrizations with prescribed support,
  via curve valuations. We recover and extend results by Sturmfels,
  Tevelev and Yu for generic coefficients, considering rational
  parametrizations with non-trivial denominator.  The advantage of our point of view
  is that it can be generalized to deal with non-generic parametrizations.  We
  provide a detailed analysis of the degree of the closed image, 
  based on combinatorial conditions on the relative positions of
  the supports of the polynomials defining the parametrization. 
  We obtain a new formula and finer bounds on the degree, when the
  supports of the polynomials are different. We also present a new
  formula and bounds for the order at the origin in case the closed image is a
  hypersurface. 
\end{abstract}

\maketitle

\section{Introduction} \label{sec:introduction}

A classical question that has thrived research in Computational
Algebraic Geometry is the problem of \emph{implicitization}. 
The aim is to describe the prime ideal of polynomial relations among
the coordinates of a rational map $\ff$. 
 We concentrate on sparse elimination and we consider a family
 of rational maps of the following form:
the input is a field $\K$ of characteristic zero,
$n+1$ finite sets $A_0, \dots, A_n$ of
lattice points in $\ZZ^d$ and  $n+1$ non-zero Laurent
 polynomials in $d$ variables $f_0, \dots, f_n
\in \K[x_1^{\pm 1},\dots, x_d^{\pm 1}]$ supported on these sets.
More
precisely,
\begin{equation}
f_i=\sum_{a\in A_i} c_{i,a} x^a, x=(x_1, \dots, x_d), \,  c_{i,a}
\in \K, \mbox{ for all }  i \in \{0,\dots,n\},\label{eq:2}
\end{equation}
and our parametrization is of the form
\begin{equation}
  \label{eq:def3}
  \ff\colon (\K^*)^d\dashrightarrow
  (\K^*)^n \qquad \ff=\left(\frac{f_1}{f_0}, \dots, \frac{f_n}{f_0}\right).
\end{equation}

As the ideal of  relations among the coordinates of $\ff$ is defined over
any subfield containing the coefficients, we will assume in what follows without loss of generality
 that $\K$ is algebraically
closed. We can think of the implicitization problem as a
moduli question, as the answer differs depending
on the genericity of the coefficients $(c_{i,a})_{i=0}^n$. 
We will consider $\K$ with the trivial valuation. 
In~\cite{STY07}, Sturmfels, Tevelev and Yu approached this
question with tools of tropical geometry, thus exploiting the
combinatorial nature of the problem. Instead of finding the
ideal of relations, we can compute its tropicalization. This is a
rational weighted balanced polyhedral fan in $\RR^n$ that captures
the combinatorics of this ideal. Tropical implicitization
techniques are well suited to study the generic points of this
space via the theory of Geometric
Tropicalization developed by Hacking, Keel and
Tevelev~\cite{Cueto,HKT09}.  These techniques are hard to address
for special choices of coefficients,
since they depend on the process of resolution of
singularities.

We approach this question from a different perspective. Namely, we
go back to the valuative approach to tropical geometry, manifested
in the Fundamental Theorem of Tropical Geometry~\cite{Draisma08,EKL06},
 and we reinterpret tropical
implicitization in this elementary language. The main
advantage of this approach is that we recover in Theorem~\ref{thm:tropset} 
a straightforward generalization of the description
of~\cite[Theorem 2.1]{STY07} for generic parametrizations, while
developing tools that could be further refined to study the non-generic case.  
In fact, Theorem~\ref{thm:tropset} can be deduced from their result,
as we discuss in Section~\ref{sec:mult}.

We also study in detail the degree of the closed image of
$\ff$ (that is, the closure
of the image  of $\ff$) which we denote by $S$, in
case it has dimension $d$. 
When $S$ is a hypersurface (that is, when $d=n-1$),  one can get the direction and the
length of the edges of the Newton polytope $N(H)$ of a defining equation $H$ for $S$
from the description of the cones in the tropicalization
of the image of $\ff$ and their multiplicities 
Then, $N(H)$ can be algorithmically reconstructed, see for instance~\cite{JensenYu}.  
We could then from this information extract the degree $\deg(H)=\deg(S)$.
In the case of lattice polygons, there is a clear description of $N(H)$ in~\cite{DAS}. 
If  $f_0=1$ and all supports are equal, $\deg(S)$ is computed 
in~\cite{SY94} by means of resultants.
In case $f_0$ is a monomial, the description of the Newton polytope of $H$ was also studied 
in general by Esterov and Khovanskii in~\cite{EK08}, where
they develop an elimination theory of polytopes via mixed 
fiber polytopes in the sense of McMullen. This gives the Newton polytope $N(H)$ only 
for generic coefficients, from which $\deg(S)$ can be read 
in this case.  We give a precise description  in Theorem~\ref{thm:degS}.

There is a straightforward upper bound for $\deg(S)$, given
by the lattice volume of the convex hull $(\conv(\cup_{j=0}^n
A_j))$ of the union of the supports (see for instance~\cite{EK03} in case $d=n-1$). The divergence from this
upper bound for generic coefficients is related to the following question. Given
$A_0,\dots, A_n$ and Laurent polynomials $f_0, \dots, f_n$ in $d$ variables with
these supports respectively and generic coefficients, when $d$ generic
linear combinations  $ \ell_{1,0} f_{0}+ \cdots + \ell_{1,n} f_{n},
\dots, \ell_{d,0} f_{0}+ \cdots + \ell_{d,n} f_{n}$ with
generic coefficients $\ell_{i,j}\in \K$, have 
generic coefficients with respect to the support $\cup_{j=0}^n A_j$?
In Theorem~\ref{thm:degS}  we give conditions on the relative
positions of $A_0,\dots, A_n$ under which this upper bound and some
refined ones are attained. 
The conditions are similar to the conditions of monotonicity of the
mixed volume studied in~\cite{BS17} for the case of $n$ sparse polynomials in
$n-1$ variables, but they are in fact different as Example~\ref{ex:different} shows.

When $S=(H=0)$ is a hypersurface, we also compute by means of curve valuations the order at the origin
${\rm ord}_0(S)={\rm ord}_0(H)$.  Again,
if the Newton polytope $N(H)$ is known, this  information can be extracted from it.  
Under our hypotheses,
the origin is never in the image of $\ff$, but it could be in its closure.
We compute ${\rm ord}_0(S)$ in Theorem~\ref{thm:ord} under a condition on the family of
supports that holds in many cases. This order can be greater than one 
(that is, the origin is a singular point of the closure of $S$ in $\K^n$) even for generic coefficients, depending again 
on the relative positions of the supports $A_0,\dots,A_n$.
We also give along the paper examples not satisfying the hypotheses of our results, but
were we can still compute degree and order adapting the arguments in our proofs. 

In Section~\ref{sec:descr-trop-as} we describe the tropicalization of $S$. We prove the basic
Theorem~\ref{thm:trop} and the main Theorem~\ref{thm:tropset}. 
We also show in \S~\ref{sec:example} how this result
and its proof can be used in generic and non-generic cases.  Section~\ref{sec:degr-impl-equat} deals with the
degree computation for any $d$. Our main result there is Theorem~\ref{thm:degS} and in \S~\ref{ssec:suffcond} we
give different conditions for equality in the inequalities in its statement, in case $d=n-1$. Section~\ref{sec:order}
deals with the computation of the order of $S$ at the origin for $d=n-1$. We present our main result 
 Theorem~\ref{thm:ord} and we discuss examples that show how its proof
 can be used to study cases in which the hypotheses are not satisfied.  

\medskip

\noindent{\bf Acknowledgements:} We are very grateful to M. Ang\'elica Cueto, who worked with us at an earlier stage
of this project. In particular, she suggested to us the example we present in Section~\ref{sec:example} and provided
the first picture in Example~\ref{ex:nongeneric}.

\section{The tropicalization of the image of $\ff$}
\label{sec:descr-trop-as}

We fix a family of finite integer sets $A_0, \dots, A_{n} 
\subset \RR^d$ and we take $n+1$ Laurent polynomials $f_0, \dots, f_n$ 
as in~\eqref{eq:2}  with these respective supports and coefficients in $\K$. 
The goal
of this section is to describe the tropicalization of the closure
$\overline{{\rm im}(\ff)}$ of the image of $\ff$ in~\eqref{eq:def3} in case
 the coefficients of $f_0, \dots, f_n$ are generic. 
Since the parametrization is invariant under multiplication of
all the $f_i$'s by the same monomial, that is under a common translation of the supports, 
we will assume all along, without loss of generality, that our
supports $A_0,\dots, A_n$ lie in the positive orthant $(\ZZ_{\geq
0})^d$, that is, that $f_0, \dots, f_n\in \K[x_1,\dots, x_d]$. 

Our main result in this section is Theorem~\ref{thm:tropset}, which
describes the tropicalization of the closure of the image of $\ff$ as a set.  It 
 extends~\cite[Theorem 2.1]{STY07} to the case
of rational functions.
Our proof is built from
Theorem~\ref{thm:trop} and Lemma~\ref{lem:order} below using curve valuations. The main
advantage of this approach is that it only involves elementary
techniques and it hints on how to proceed in case the coefficients fail to be generic. 
We develop this idea in
\S~\ref{sec:example}. In Section~\ref{sec:mult} we recover the associated
multiplicities from the results in~\cite{ST08}.

\subsection{Tropical varieties}\label{sec:TropVar}

In this section, we present some basic definitions and notation.
In particular, we give an introduction to useful statements on
tropical geometry from the valuative perspective and recall the
connection between initial ideals, tropicalizations and power
series solutions to Laurent polynomial equations.
Our main result here is Theorem~\ref{thm:trop}.

We start by recalling the
definition of the tropical variety of an ideal from the point of
view of Gr\"obner theory. We consider an algebraically closed field $\K$ 
of characteristic zero with trivial valuation. As usual, we set
$\K^* = 
\K \setminus \{0\}$ and we call $(\K^*)^d$ the $d$-torus. For any nonzero
Laurent polynomial $g = \sum g_\alpha x^\alpha \in \K[x_1^{\pm 1}, \dots, x_d^{\pm 1}]$ 
and $w \in \RR^n$, we denote
by $\init_w(g)$ the subsum of those terms $g_{\alpha}x^{\alpha}$ in
$g$ for which $g_{\alpha}\neq 0$ and 
$\langle w, \alpha\rangle$ is minimum. 
We denote by $\init_w(I)$ the ideal generated by all the initials $ \init_w(h)$ with respect to $w$ of all nonzero polynomials 
$h\in I$.

 Given an ideal $I \subset \K[x_1^{\pm 1}, \dots, x_d^{\pm 1}]$, 
 the support of the associated {\em tropical variety} or \emph{tropicalization}  $\trop{I}$ of $I$
 is the set:
\begin{equation}\label{eq:tropI}
\trop{I}=\{w \in \RR^{d}\ | \init_{w}(I)\ \mathrm{does\ not\
contain\ any \ monomial}\}.
\end{equation}
Given an algebraic variety $V \subseteq (\K^*)^d$, we set
$\trop{V} = \trop{I(V)}$, where  $I(V)$ denotes the ideal of Laurent polynomials
vanishing on $V$.

By Hilbert's Nullstellensatz,  $\cT(I)$ records all
points $w$ such that the associated variety $V_{\K^*}(\init_w(I))$ is
nonempty. When the ideal $I$ is principal, generated by a Laurent
polynomial $h$, the set $\cT(I)$ consists of all directions $\ww$
for which the initial form $\init_{\ww}(h)$ is not a monomial. Thus,
$\cT (I)$ consists precisely of the codimension one cones in the
inner normal fan of the Newton polytope $N(h)$ of $h$, that is, of the
convex hull of the exponents of the monomials occuring in $h$ with nonzero
coefficient.

Tropical geometry can be
approached from the perspective of valuations, when we think of
them as Bieri-Groves'  sets~\cite{BG84}.
To state explicitly the link between the previous definition~\eqref{eq:tropI}
and the characterization of the tropicalization of  $\trop{I}$ via 
curve valuations, we introduce the algebraically closed field of power series with real
  exponents and {\sl well-ordered} supports
 $\KK=\K\{\!\{\varepsilon^{\RR}\}\!\}$, known as \emph{generalized
    Puiseux series}.
    
An element $\sigma\in \KK^*$ has the form
$\sigma = b_{0}\, \varepsilon^{\alpha_{0}} + \hot(\varepsilon)$,
where $b_{0}\in \K^*$, $\alpha_{0}\in \RR$ and $\hot(\varepsilon)$ is a sum of terms of
the form $b_{i}\, \varepsilon^{\alpha_{i}}$ with $b_i\in \K$ and
$\alpha_{i+1}> \alpha_{i}$ for all $i\ge 0$. We call
$\init(\sigma):= b_{0}\, \varepsilon^{\alpha_{0}}$ its
\emph{initial term} and $\val(\sigma):=\alpha_{0}$ its
\emph{order}. In particular $\val(c)=0$ for all $c \in \K^*$. We
can extend these notions to tuples in $\KK^d$ coordinatewise. 

\medskip

The {\sl Fundamental Theorem of Tropical Geometry}~\cite{Draisma08,EKL06}  
asserts that given an ideal $I$ in the Laurent polynomial ring
  $\K[x_1^{\pm 1}, \dots, x_d^{\pm 1}]$, the tropical variety $\trop{I}$ is the set of
  all valuations of nonzero generalized Puiseux series solutions to $I$:
\begin{equation} \label{eq:FTTG}
\trop{I}= \{ \val(\zeta) \; |\; \zeta \in V_{\KK^*}(I) \}.
\end{equation}
Note that by choosing the field of  generalized Puiseux series $\KK$ which is algebraically closed and complete  
~\cite{Poonen93} with value group $\RR$, it is not
necessary to take closure  on the right side ~\cite{Markwig10}.

The  tropicalization $\trop{I}$ is a rational polyhedral set that can be given
a (non-unique rational polyhedral) fan structure and any such fan can be refined to 
a {\sl tropical fan structure}.  The multiplicity of a maximal cone $\sigma$ in a fan
structure of $\trop{I}$ is defined as the  sum over all minimal associated
primes $P$ of the initial ideal $\rm{in}_w(I)$, of the multiplicities:
$$\rm{mult}(\sigma) = \sum \rm{mult}(P, \rm{in}_w(I)),$$ 
where $w$ is any point in the relative interior of $\sigma$.
These multiplicities satisfy a balancing condition. We refer the reader to~\cite{ST08}
for details.

Consider now  nonzero Laurent polynomials $f_0, \dots, f_n$ and the map $\ff$ from~\eqref{eq:def3}. Denote by
$\ff_{\KK}: (\KK^*)^d \dashrightarrow \KK^n$  the extension of $\ff$ to $\KK$.  
If $I_{\ff}\subset \K[y_1^{\pm1}, \dots, y_n^{\pm 1}]$
is the ideal that defines the variety $\overline{\im \ff}$ and $J_{\ff}\subset \KK[y_1^{\pm1}, \dots, y_n^{\pm1}]$,
 the ideal that defines
the variety $\overline{\im \ff_{\KK}}$, then $J_{\ff} =
I_{\ff}\otimes_\K\KK$. 
Given a set $X$, denote by $\cl(X)$ its closure in the Euclidean
topology. The next result, which is an easy consequence of~\eqref{eq:FTTG}, 
 shows the connection between the
tropicalization of the variety parameterized by the rational map
$\ff$ and the valuations of points in its image.

\begin{theorem}\label{thm:trop} Let $f_0, \dots, f_n$ be nonzero Laurent polynomials in $d$ variables 
with coefficients in $\K$ and
let
 $\ff: \K^d \dashrightarrow
  (\K^*)^n$ be the rational map they define as in~\eqref{eq:def3}.
Then,
\begin{equation}
\cT(I_\ff)= \cl  \{ \val(\ff_{\KK}(\sigma)) \; | \; \sigma \in
(\KK^*)^{d}, \, \ff_{\KK}(\sigma) \in (\KK^*)^{n}\}.\label{eq:1}
\end{equation}
\end{theorem}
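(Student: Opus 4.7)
The plan is to derive both inclusions of~\eqref{eq:1} directly from the Fundamental Theorem of Tropical Geometry~\eqref{eq:FTTG} applied to $I_\ff$. Since $J_\ff = I_\ff\otimes_\K \KK$ cuts out $\overline{\im \ff_\KK}$ inside the torus $(\KK^*)^n$, FTTG gives
\[
\cT(I_\ff) \;=\; \{\,\val(\zeta) : \zeta \in \overline{\im \ff_\KK}\cap(\KK^*)^n\,\}.
\]
The inclusion $\supseteq$ of~\eqref{eq:1} is then immediate: every $\sigma\in(\KK^*)^d$ with $\ff_\KK(\sigma)\in(\KK^*)^n$ produces a point of $\overline{\im \ff_\KK}\cap(\KK^*)^n$, so its valuation lies in $\cT(I_\ff)$, and $\cT(I_\ff)$ is closed in the Euclidean topology of $\RR^n$.

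For the reverse inclusion I would let $U\subseteq(\KK^*)^d$ be the Zariski open set where $f_0,\dots,f_n$ are simultaneously nonzero, so that $\ff_\KK$ is a morphism $U\to(\KK^*)^n$ and the right-hand side of~\eqref{eq:1} equals $\cl\{\val(\eta):\eta\in\ff_\KK(U)\}$. By Chevalley's theorem, $\ff_\KK(U)$ is a constructible subset of $(\KK^*)^n$; its Zariski closure is the irreducible variety $\overline{\im \ff_\KK}\cap(\KK^*)^n$, so it contains a Zariski open dense subset $W$. Put $Z:=(\overline{\im \ff_\KK}\cap(\KK^*)^n)\setminus W$, a proper closed subvariety of strictly smaller dimension than $\overline{\im \ff_\KK}\cap(\KK^*)^n$.

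The argument then concludes by the Bieri--Groves dimension statement~\cite{BG84}, that $\dim\cT(V)=\dim V$ for irreducible subvarieties $V$ of the torus. In particular $\cT(Z)$ has strictly smaller dimension than $\cT(I_\ff)$, so the open set $\cT(I_\ff)\setminus\cT(Z)$ is Euclidean-dense in $\cT(I_\ff)$. For every $w$ in this dense set, FTTG produces some $\zeta\in\overline{\im \ff_\KK}\cap(\KK^*)^n$ with $\val(\zeta)=w$; since $w\notin\cT(Z)$, any such $\zeta$ necessarily lies in $W\subseteq\ff_\KK(U)$, giving $w=\val(\ff_\KK(\sigma))$ for some $\sigma\in U$. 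Taking Euclidean closures yields the inclusion $\subseteq$.

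The main obstacle is precisely the surjectivity issue that forces the Euclidean closure to appear in~\eqref{eq:1}: a generic point of $\overline{\im \ff_\KK}\cap(\KK^*)^n$ need not be an actual value of $\ff_\KK$, so one cannot in general write a given $w\in\cT(I_\ff)$ as $\val(\ff_\KK(\sigma))$ on the nose. The interplay between Chevalley's theorem and the fact that tropicalization preserves dimension of irreducible varieties is what allows us to approximate every such $w$ by valuations of genuine image points, and is the only non-formal ingredient beyond~\eqref{eq:FTTG}.
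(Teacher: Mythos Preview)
Your proof is correct and follows essentially the same route as the paper's: both use the Fundamental Theorem of Tropical Geometry for the easy inclusion, and for the reverse inclusion both pass to a proper closed subvariety $Z$ (the paper calls it $Y$) of $\overline{\im \ff_\KK}$ whose complement lies in the actual image, then invoke the Bieri--Groves dimension theorem to conclude that $\cT(I_\ff)\setminus\cT(Z)$ is Euclidean-dense in $\cT(I_\ff)$. Your version is slightly more explicit in naming Chevalley's theorem to obtain the open dense $W\subseteq\ff_\KK(U)$, whereas the paper simply asserts the existence of such a $Y$; conversely, the paper emphasizes that $\cT(I_\ff)$ is \emph{pure} of dimension $\dim\overline{\im\ff_\KK}$ (which is what guarantees that removing the lower-dimensional $\cT(Z)$ leaves a dense set), a point you use implicitly. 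One small remark: you state Bieri--Groves for irreducible $V$ but then apply it to $Z$, which need not be irreducible; this is harmless since one just decomposes $Z$ into irreducible components, each of strictly smaller dimension.
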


\begin{proof}
Let $V= \{ \val(\ff_{\KK}(\sigma)) \; | \; \sigma \in (\KK^*)^{d}, \, \ff_{\KK}(\sigma) \in (\KK^*)^{n}\}$.
The Fundamental Theorem of Tropical Geometry implies that $\val(\ff_{\KK}(\sigma))\in
\trop{I_\ff}$ for every $\sigma \in (\KK^*)^d$ in the preimage
of the torus $(\KK^*)^n$ belonging to the domain of $\ff_\KK$. As tropical varieties are closed in the
Euclidean topology, we have  the inclusion  $\cl(V)\subset \cT(I_\ff)$ in~\eqref{eq:1}.
Moreover, since $V$ is dense in
$\cT(I_\ff)$, we have equality in~\eqref{eq:1} by  Theorem~A in \cite{BG84}.
 Indeed, Bieri-Groves Theorem~A asserts that all maximal
cones in $\trop{I_\ff}$ have dimension equal to $\dim(\overline{\im \ff_{\KK}})$. Let
$Y \subset \overline{\im \ff_{\KK}}$ be
an algebraic variety of codimension $1$ such that
$\overline{\im \ff_{\KK}} \smallsetminus Y\subset \im \ff_{\KK}
\subset (\KK^*)^n$. Then, all maximal cones in $\trop{I(Y)}$ 
have dimension  equal to $\dim (Y) < \dim(\overline{\im \ff_{\KK}})$.
Consequently, $\cT(I_\ff)\setminus \cT(I(Y))$ is dense in $\cT(I_{\ff})$,
as well as $V$.
\end{proof}


\subsection{Defining the cones and the statement of Theorem~\ref{thm:tropset}}\label{ssec:defcones}

We present in Definition~\ref{def:conesC_J}  the
cones that occur in the tropicalization  of the image of $\ff$.
We first need some notation concerning supports and polytopes.
Let $B$ be a finite set in $\ZZ^d$ and $h \in \KK[x_1, \dots,
x_d]$ a polynomial $h = \sum_{q \in B}c_qx^q$ with support $B$ with
$c_q \ne 0$ for all $q \in B$. For any $\alpha\in \RR^d$, we
define
\begin{equation}
  \label{eq:4.2}
  m_\alpha(h) = \min_{q \in B}\{\val(c_q)+\<\alpha, q\>\}
  \mbox{ and } \init_{\alpha}(h) = \sum_{m_\alpha(h) =
  \val(c_q)+\<\alpha, q\>}\init(c_q)x^q.
\end{equation}
In particular, if $h \in \K[x_1, \dots, x_d]$, $m_\alpha(h)$ does
not depend on its coefficients and we have
\begin{equation}
  \label{eq:4.1}
  m_\alpha(B) = m_\alpha(h) = \min_{q \in B}\{\<\alpha, q\>\}
  \mbox{ and } \init_{\alpha}(h) = \sum_{m_\alpha(B) =
  \<\alpha, q\>}c_qx^q,
\end{equation}
as at the beginning of \S~\ref{sec:TropVar}.

Let $Q \subset \RR^d$ be a lattice polytope and  $\alpha \in \RR^d$.
 We will call face$_\alpha(Q)$ the face of $Q$
which has $\alpha$ as an interior normal vector. That is,
\begin{equation}\label{eq:face}
\mbox{face}_\alpha(Q)=\{ q \in Q \ | \
<\alpha,q>=m_\alpha(Q)\}.\end{equation} If $B \subset \ZZ^d$ is a
finite set, ${\rm face}_\alpha(B)$  (called a face of $B$) is obtained as the intersection with $B$ of the face of the convex
hull of $B$ defined by $\alpha$. Thus, for any nonzero Laurent polynomial $g$,
 the Newton polytope of $\init_\alpha(g)$ is
face$_\alpha(N(g))$.

Given a lattice polytope $Q \subset \RR^{d}$, we use the notation $\vol(Q)$
for its normalized volume with respect to the lattice $\ZZ^d$ (so
$\vol(Q)$ equals $d!$ times its Euclidean volume $\vol_e(Q)$). 
Given a family of polytopes $Q_1, \dots,
Q_d \in \ZZ^d$, we denote by $\MV(Q_1, \dots, Q_d)$ their $d$-dimensional
mixed volume
\begin{equation} \label{eq:MV}\MV(Q_1, \dots, Q_d) =
\sum_{I \subset \{1, \dots, d\}}(-1)^{n-\#I}\vol_e(\sum_{j \in
I}Q_j).\end{equation} If $B_1, \dots, B_d$ are finite sets, the
mixed volume $\MV(B_1, \dots, B_d)$ of $B_1, \dots, B_d$ is
defined as the mixed volume of their convex hulls $\conv(B_1),
\dots, \conv(B_d)$. Recall that, by Bernstein's Theorem
(\cite{Ber75}), this is an upper bound for the number of isolated
common zeros in $(\KK^*)^{d}$ of any system of polynomials with
support set $B_1, \dots, B_d$. Moreover, equality holds for
generic polynomials with those supports.

\smallskip

We denote by $[n]_0$  the set $[n]_0=\{0, \dots, n\}$ and by $[n]$ the
set $[n] = \{1, \dots, n\}$.

\begin{definition} \label{def:coherent} Given
$B=(B_1, \dots, B_r)$ a family of finite sets in $\RR^d$
(or convex polytopes) and $F_j$ a face of $B_j$ for all
$j \in [r]$, we will say that $F=(F_1, \dots, F_r)$ is a
coherent collection of faces of $B$ if there exist $\alpha \in
\RR^d$ such that $F_j=\mbox{face}_\alpha(B_j)$ for all $ j \in [r]$.
\end{definition}

\begin{definition}\label{def:adapted} 
A family of finite sets $B_1, \dots, B_r$ in $\ZZ^{d}$
 is said to be essential if for any subset
$J \subset [r]$ of size at most $d$, it holds that
\begin{equation}\label{eq:essential}\mbox{dim}(\sum_{i\in J}
\conv(B_i)) \ge |J|.\end{equation}
With the previous notation,
given a set $J\subset [r]$, we will say that a
coherent collection $F=(F_1, \dots. F_r)$ of faces of $B$ is adapted to $J$ if
$|J|\le d$ and the family $\{F_j\}_{j \in J}$ is essential.  
\end{definition}

Consider again the family of supports
 $A_0, \dots, A_n\subset \ZZ_{\ge0}^d$. Given any $\alpha \in \RR^d$, we define
\begin{equation}
  \label{eq:7}
  \psi_i(\alpha) \, = \, m_{\alpha}(A_i) - m_\alpha(A_0),  \quad 1  \in [n],
\end{equation}
 and we consider the map $
\psi: \RR^d \rightarrow \RR^n$  defined by
\begin{equation}\label{eq:psi}
\psi(\alpha)=(\psi_1(\alpha), \dots, \psi_n(\alpha)).
\end{equation}

We now define the cones on which
the description of $\mathcal{T}(\overline{\im \ff})$ is based.
We use the notation $\mathbf{1}=(1, \dots, 1)\in \RR^n$.
Given a subset $J \subset [n]_0$, we use
the standard notation $(\RR_{\ge 0})^{J}$ to denote the real
vectors $y \in \RR^n$ satisfying $y_j \ge 0$ for all $j\in J$ and
$y_{j}=0$ for $j\not\in J$. 

\begin{definition}\label{def:conesC_J}
Given a set $J \subset [n]_0$ and a coherent collection $F=(F_0,
\dots, F_n)$ of faces of $(A_0, \dots, A_n)$ adapted to $J$, we
define the following cones in $\RR^n$:
\begin{itemize}
\item[*] if $0 \notin J$,
  \begin{equation}
C_J^F:= \{ \psi(\alpha) + u  \ | \ \alpha=0 \mbox{ or }
\mbox{face}_\alpha(A_i) = F_i ,  \, \forall\ i \in [n]_0 \mbox{ and }
u \in (\RR_{\ge 0})^J\},
\end{equation}
\item[*] if $0 \in J$, set $J'= J - \{0\}$ and
\begin{equation}
C_J^F:= \{ \psi(\alpha) + u - \lambda \, \mathbf{1}  \ | \
\alpha=0 \mbox{ or } \mbox{face}_\alpha(A_i)=F_i, \, \forall\ i \in
[n]_0, \  u \in (\RR_{\ge 0})^{J'} \mbox{ and } \lambda \in
\RR_{\geq 0}\}.
\end{equation}
\end{itemize}
\end{definition}

Note that there is a bijection between 
faces $\sum_{j=0}^rF_n$ of
$\sum_{j=0}^n A_j$, and cones of
the inner normal fan of $\conv(\sum_{j=0}^nA_j)$.
Denoting by $C^F$ the cone corresponding to a coherent collection $F$ 
of faces of $(A_0, \dots, A_n)$, we have that 
for every $0 \notin J$, the cone $C_J^F$ can be written as $C_J^F
= \psi(C^F) + (\RR_{\ge 0})^J$ as defined by \cite{STY07}, and for
$0 \in J$, $C_J^F = \psi(C^F) + (\RR_{\ge 0})^J + \RR_{\le 0} \,\mathbf{1}.$

We  now present Theorem~\ref{thm:tropset}.
Our statements are direct generalizations
of \cite[Theorem~2.1]{STY07}, but the proofs are different and
based on curve valuations and give a hint on how to describe with this tool
the tropicalization of the image when the coefficients are not generic.

\begin{theorem} \label{thm:tropset} Consider the rational map $\ff$ as
  in~\eqref{eq:def3} where $f_0, \dots, f_n\in \K[x_1,\dots, x_d]$ have supports $A_0, \dots, A_n$ and generic coefficients.
Then, the following subsets of $\RR^n$ coincide:
\begin{enumerate}[label={(\roman*)},itemindent=1em]
\item \label{thmitm:tropset1} the tropical variety $\cT (\overline{\im \ff})$,
\item \label{thmitm:tropset3} the union of the cones $C_J^F$, for all  $J \subseteq [n]_0$ such that
$|J| \leq d$ and all coherent collections $F$ of faces of $(A_0,
\dots, A_n)$ adapted to $J$.
\end{enumerate}
\end{theorem}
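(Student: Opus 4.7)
The plan is to prove both inclusions using Theorem~\ref{thm:trop}, which identifies $\cT(\overline{\im \ff})$ with the Euclidean closure of $V=\{\val(\ff_\KK(\sigma))\mid \sigma\in (\KK^*)^d,\ \ff_\KK(\sigma)\in(\KK^*)^n\}$. The whole argument rests on the simple observation that if $\sigma=b\,\varepsilon^\alpha+\hot$ with $b\in (\K^*)^d$, then for each $i$,
\[
\val(f_i(\sigma))=m_\alpha(A_i)+\lambda_i,\qquad \lambda_i\ge 0,
\]
with $\lambda_i>0$ precisely when $\init_\alpha(f_i)(b)=0$.

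For the inclusion $(ii)\subseteq (i)$, fix $\alpha\in \RR^d$, $J\subseteq [n]_0$ with $|J|\le d$, and a coherent collection $F$ of faces of $(A_0,\dots,A_n)$ cut out by $\alpha$ and adapted to $J$. I would first realize the ``lattice point'' $\psi(\alpha)$ (or $\psi(\alpha)-\lambda\mathbf{1}$) by choosing $b\in (\K^*)^d$ generic, so that $\init_\alpha(f_i)(b)\ne 0$ for every $i$, giving $\val(\ff_\KK(\sigma))=\psi(\alpha)$. To obtain nontrivial $u\in (\RR_{\ge 0})^J$ one must force cancellations in the leading terms of $f_i(\sigma)$ for $i\in J$. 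By the essentiality hypothesis on $\{F_i\}_{i\in J}$, one can complete this subsystem with $d-|J|$ generic linear forms and apply Bernstein's theorem to produce a common zero $b\in (\K^*)^d$ of $\{\init_\alpha(f_i)\}_{i\in J}$; for generic coefficients of the $f_i$ this mixed volume is strictly positive. Picking such $b$ guarantees $\lambda_i>0$ for all $i\in J$. Then I would adjust the higher-order terms of $\sigma(\varepsilon)$ inductively, Hensel-style, to tune each $\lambda_i$ for $i\in J$ to a prescribed positive real; together with a density argument this yields every $u\in (\RR_{\ge 0})^J$ in the closure. The case $0\in J$ is handled identically, treating $\lambda_0>0$ as the coefficient of $-\mathbf{1}$.

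For the reverse inclusion $(i)\subseteq (ii)$, by Theorem~\ref{thm:trop} it suffices to show that every $\val(\ff_\KK(\sigma))$ lies in some $C_J^F$. Write $\sigma=b\,\varepsilon^\alpha+\hot$ and set $\lambda_i=\val(f_i(\sigma))-m_\alpha(A_i)$ and $J=\{i\in [n]_0:\lambda_i>0\}$. Then the $i$-th coordinate of $\val(\ff_\KK(\sigma))$ equals $\psi_i(\alpha)+\lambda_i-\lambda_0$, so the point has the form required by Definition~\ref{def:conesC_J} with $F_i=\face_\alpha(A_i)$. It remains to verify that $|J|\le d$ and that $\{F_j\}_{j\in J}$ is essential. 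Both follow because $b\in (\K^*)^d$ is a simultaneous zero of the $|J|$ polynomials $\init_\alpha(f_i)$, $i\in J$, whose supports are $F_i$: under generic coefficients this cannot happen unless every subfamily of size $\le d$ satisfies the dimension inequality~\eqref{eq:essential}, and in particular $|J|\le d$ (otherwise the mixed volume of any $d$-subsystem vanishes, so Bernstein's theorem together with genericity excludes a torus solution).

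The main obstacle is the constructive part of step $(ii)\subseteq (i)$: arranging the higher-order coefficients of the Puiseux series $\sigma(\varepsilon)$ so that, \emph{simultaneously} for all $i\in J$, one obtains exactly the prescribed values $\lambda_i$ without accidentally killing the leading term of any $f_i$ for $i\notin J$. This requires an iterative Hensel-style perturbation that exploits the essentiality of $\{F_i\}_{i\in J}$ (to keep the deformation families nonempty) together with a density argument in $(\RR_{\ge 0})^J$ to pass from rational $u$-vectors to arbitrary ones via the closure on the right-hand side of~\eqref{eq:1}.
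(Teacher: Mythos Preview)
Your proposal is correct and follows essentially the same route as the paper: the paper packages your ``Hensel-style perturbation'' into Lemma~\ref{lem:order}, which directly produces $\sigma$ with $\val(f_i(\sigma))=\omega_i$ for any prescribed \emph{real} $\omega_i>m_\alpha(A_i)$ (so no rational-to-real density argument is needed; the closure in Theorem~\ref{thm:trop} is used only to pass from $u\in(\RR_{>0})^J$ to $u\in(\RR_{\ge0})^J$). One small clarification: the concern about not ``accidentally killing the leading term'' of $f_i$ for $i\notin J$ is handled at the level of choosing $b$---genericity of the coefficients of $f_i$, $i\notin J$, independent from those of $f_j$, $j\in J$, guarantees $\init_\alpha(f_i)(b)\neq 0$---rather than in the Hensel step itself.
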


The proof will be given in \S~\ref{ssec:proof}.

Denote by $P_i$ the convex hull of $A_i$  for any $i \in [n]_0$.
 For instance, in the unmixed case when all  $P_i$  are
  dilations of a fixed polytope $P\in \RR^d$, that is, when there exists
  positive integers $d_0,\dots, d_n$, such that $P_i=d_i P$ for
  all $i \in [n]_0$~\cite[Section 3.2]{STY07},
 we have that
  $$\psi(\alpha)=m_{\alpha}(P)(d_1-d_0, \dots, d_n-d_0).$$
The image of $\psi$ varies as in \cite{STY07} according to whether
$P$ contains the origin or not.
When $P$ contains the origin, the image of $\psi$ is
$\RR_{\le0}(d_1-d_0,\dots, d_n-d_0)$ and the tropical variety $\cT
(\overline{\im \ff})$ is the union of the cones
$\RR_{\le0}(d_1-d_0, \dots, d_n-d_0) + (\RR_{\ge 0})^J$ over all
$|J|=\dim(P)-1$ such that $0 \not\in J$, and $\RR_{\le0}(d_1-d_0,
\dots, d_n-d_0) + (\RR_{\ge 0})  ^{J\backslash\{0\}}+
\RR_{\ge0}(-1, \dots,-1)$ over all $|J|=\dim(P)-1$ such that $0
\in J$.
When $P$ does not contain the origin, the image of $\psi$ is
$\RR(d_1-d_0, \dots, d_n-d_0)$ and $\cT (\overline{\im \ff})$ is
the union of the cones $\RR(d_1-d_0, \dots, d_n-d_0) + (\RR_{\ge
0})^J$ over all $|J|=\dim(P)-1$ such that $0 \not\in J$, and
$\RR(d_1-d_0, \dots, d_n-d_0) + (\RR_{\ge 0})^{J\backslash\{0\}} +
\RR_{\ge0}(-1, \dots,-1)$ over all $|J|=\dim(P)-1$ such that $0
\in J$.

We could use Theorem~\ref{thm:tropset}
to compute the dimension of the variety parameterized
by $\ff$, which equals the maximal dimension of a cone $C^F_J$,
because the tropical variety
$\cT (\overline{\im \ff})$ and $\overline{\im \ff}$ have the same
dimension by~\cite{BG84}.
For instance, let $d=n-1$. If there
exists a subset $J_0\subset [n]_0$ with
  $|J_0|=n-1$ and $C_{J_0} = \cup_F C^F_{J_0} \neq \emptyset$, then $\ff$ parameterizes
  a hypersurface. Note that the reciprocal is not true. For instance,
  consider the rational map $\ff:(\CC^*)^2 \dashrightarrow (\CC^*)^3$ defined by $\ff(x_1,
x_2)=(c_1x_1, c_2x_1, x_2)$,with $c_1, c_2 \neq 0$. Then, $\ff$ parameterizes a hypersurface
but $C_J=\emptyset$ for all $J \ne \emptyset$, in particular those
with cardinal $2$. This is not a contradiction because in this case
dim$(C_\emptyset)=2.$

Theorem~\ref{thm:tropset} could be seen as a consequence of \cite[Theorem
  2.1]{STY07}, which applies to the case when $f_0$ is a
  monomial ($A_0$ consists of a single point).
Consider again finite sets $A_0,\dots,A_n \subset
  \ZZ^d$ and polynomials $f_0, \dots, f_n$ 
with these respective supports such that the closure of the image of the
associated rational map $\ff$ has dimension $d$. 

Let $\rho$ be the homomorphism of tori with its corresponding
linear map $A$ of lattices:
\begin{equation}
 \begin{matrix}\label{eq:rho}\rho: (\K^*)^{n+1} &
\dashrightarrow & (\K^*)^{n} \\ (z_0, \dots, z_n) & \to &
(\frac{z_1}{z_0}, \dots, \frac{z_n}{z_0})\end{matrix} \quad
 \quad \begin{matrix} A: \ZZ^{n+1} & \to & \ZZ^n \\
(w_0, \dots, w_n) & \to & (w_1-w_0, \dots,
w_n-w_0)\end{matrix}.\end{equation}
Denote by $F$ the rational map:
\begin{equation}\label{eq:F} \mathbf{F}: (\K^*)^d \dashrightarrow
(\K^*)^{n+1} \quad \mathbf{F} = (f_0, f_1, \dots,
f_n).\end{equation}
Then, $\ff = \rho \circ \mathbf{F}$ and thus 
$\cT(\overline{\im \ff}) = A(\cT(\overline{\im \mathbf{F}}))$ (see Remark~1.2 in~\cite{ST08}).
The cones of the tropical variety $\cT(\overline{\im \ff})$ can then
be deduced from the description of the tropical variety
$\cT(\overline{\im \mathbf{F}})$. 
As we show in Section~\ref{sec:example},  our direct and simple approach via curve valuations
can be extended to deal with non-generic parametrizations. Also, 
curve valuations are used in Section~\ref{sec:order} to give a formula for the order
at the origin of $S$.

\subsection{Two auxiliary results}

We  first recall Hensel's lemma. 
The valuation ring of $\KK$ is
$\cO=\{\sigma \in \KK \,| \,\val(\sigma)\geq 0\}$, with maximal ideal $\fm$. Here
$\cO^*=\cO\smallsetminus \fm$ denotes the units in $\cO$, i.e.
those elements with valuation $0$.
Given a sequence of polynomials $h=(h_1, \dots, h_r)\in \KK[x_1,
\dots, x_d]$, let us denote by $J_h(x)\in \KK(x)^{d\times r}$ its
associate Jacobian matrix. Observe that the field $\KK$ is
algebraically closed, and so a Henselian field. 
{\sl Hensel's lemma}  asserts the following (see, for example, \cite{Kuhlmann00}). 
Let $h=(h_1, \dots, h_d)\in \cO[x_1, \dots, x_d]$ be a family of
non-zero polynomials. Assume that there exists $b=(b_1, \dots,
b_d)\in \cO^d$ such that
$$\val(h_i(b))>0 \mbox{ for } 1 \in [d]  \mbox{ and }\val(\det
J_h(b))=0.$$ Then, there exists a unique $\sigma 
\in \cO^d$ such that $h_i(\sigma)=0$ and
$\val(\sigma_i-b_i)>0$ for all $i  \in [d]$.

The following lemma for polynomials with nonnegative exponents is straightforward.

\begin{lemma}\label{lem:initialForms} Let $B\subset \ZZ_{\ge
0}^d$ be a finite set and $h \in \KK[x_1, \dots, x_d]$ be a
non-zero polynomial supported on $B$. For any $\sigma \in
(\KK^*)^d$ with $\init(\sigma)=b\,\varepsilon^\alpha =
  (b_1\eep^{\alpha_1}, \dots, b_d\eep^{\alpha_d})$, $b_{i}\in
  \KK^{*}$, $\alpha_{i}\in \RR$, we have
  \begin{equation}
    \label{eq:5}
    h(\sigma)=\init_{\alpha}(h)(b)\,\varepsilon^{m_{\alpha}(h)} + \hot(\varepsilon).
  \end{equation}
\end{lemma}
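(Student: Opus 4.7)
The plan is a direct term-by-term expansion. Writing $h = \sum_{q \in B} c_q x^q$ with $c_q \in \KK^*$, I would compute $h(\sigma) = \sum_{q \in B} c_q \sigma^q$ and identify the leading $\varepsilon$-power. For each $q \in B$, since $q_i \in \ZZ_{\geq 0}$ there is a genuine (finite) product $\sigma^q = \prod_{i=1}^d \sigma_i^{q_i}$, and because $\sigma_i = b_i \varepsilon^{\alpha_i} + \hot(\varepsilon)$, a short induction on $q_i$ gives $\sigma_i^{q_i} = b_i^{q_i} \varepsilon^{q_i \alpha_i} + \hot(\varepsilon)$. Multiplying across $i$ yields $\sigma^q = b^q \varepsilon^{\langle \alpha, q \rangle} + \hot(\varepsilon)$.

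Next, I would decompose $c_q = \init(c_q) + \hot(\varepsilon)$ with $\val(\init(c_q)) = \val(c_q)$, so that $c_q \sigma^q = \init(c_q)\, b^q \, \varepsilon^{\val(c_q) + \langle \alpha, q \rangle} + \hot(\varepsilon)$. Summing over the finite set $B$ and grouping by $\varepsilon$-exponent, the minimum exponent attained is $m_\alpha(h) = \min_{q \in B}\{\val(c_q) + \langle \alpha, q \rangle\}$ by~\eqref{eq:4.2}, and the coefficient of $\varepsilon^{m_\alpha(h)}$ is precisely
$$\sum_{\val(c_q) + \langle \alpha, q \rangle = m_\alpha(h)} \init(c_q)\, b^q = \init_\alpha(h)(b).$$
All remaining summands contribute strictly larger $\varepsilon$-exponents and are absorbed into $\hot(\varepsilon)$, yielding~\eqref{eq:5}.

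The only potential obstacle is bookkeeping for the higher order terms, ensuring that the remainder really does define a generalized Puiseux series. This is harmless because $B$ is finite and each $\sigma_i^{q_i}$ has well-ordered support (the nonnegativity $q_i \geq 0$ means no inverses are taken, and finite products of elements of $\K\{\!\{\varepsilon^\RR\}\!\}$ preserve well-ordered supports). I would finally note that~\eqref{eq:5} remains correct even when $\init_\alpha(h)(b) = 0$: in that case the leading $\varepsilon$-exponent of $h(\sigma)$ simply exceeds $m_\alpha(h)$, and the entire expression collapses into $\hot(\varepsilon)$.
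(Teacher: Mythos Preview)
Your proposal is correct and is precisely the straightforward term-by-term expansion the paper has in mind; indeed, the paper does not write out a proof at all, merely calling the lemma ``straightforward'' and stating it. Your additional remarks on well-orderedness of supports and on the case $\init_\alpha(h)(b)=0$ are apt and match the paper's subsequent observation that $\val(h(\sigma))\geq m_{\alpha}(h)$ with equality iff $\init_{\alpha}(h)(b)\ne 0$.
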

\noindent Notice that, a priori, we could have
$\init_{\alpha}(h)(b)=0$ so Lemma~\ref{lem:initialForms} implies that
$\val(h(\sigma))\geq m_{\alpha}(h)$, with equality
if and only if $\init_{\alpha}(h)(b)\ne0$.

Lemma~\ref{lem:order} below gives conditions which ensure that
a vector $w$ can be realized as the (coordinatewise) valuation of a point in $im(\ff_{\KK})$. 

\begin{lemma} \label{lem:order} Let $g_1, \dots, g_r$ be non-zero polynomials
  in $\K[x_1,\dots, x_d]$ with $r \leq d$ and call $B_1, \dots B_r$ in $\ZZ_{\ge 0}^d$ 
  their respective supports. Given $\alpha\in \RR^d$, assume
the initial polynomials $\init_{\alpha}(g_1), \dots,
  \init_{\alpha}(g_r)$ have a common zero $b \in (\K^*)^d$
such that their Jacobian matrix $J_{\init_{\alpha}(g)}(b)$
  has maximal rank $r$.
Then,
\begin{enumerate}[label={(\roman*)},itemindent=1em]
\item \label{lemitm:ordera} for each $\ww\in \RR^r$ verifying that
  $\ww_j> m_{\alpha}(B_j)$ for all $j \in [r]$,
  there exist  $\sigma\in (\KK^*)^d$ with $\init(\sigma)=b \eep^\alpha$ such that
  $\val(g_j(\sigma))=\ww_j$ for all $j\in [r]$.
\item \label{lemitm:orderb} if $g_1,\dots,g_r$ have generic coefficients, for each  $\ww\in \RR^r$ verifying that
  $\ww_j\geq m_{\alpha}(B_j)$ for all $j \in [r]$,
  there exist $\sigma\in (\KK^*)^d$ with $\val(\sigma)=\alpha$ such that
  $\val(g_j(\sigma))=\ww_j$ for all $j \in [r]$.
\end{enumerate}
\end{lemma}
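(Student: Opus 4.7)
The plan is to reduce both parts to an application of Hensel's lemma after a suitable monomial change of variables centered at $b\eep^\alpha$.

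For part \ref{lemitm:ordera}, I would substitute $x_i = b_i\eep^{\alpha_i} z_i$ and work with the normalized polynomials
\[ \tilde{g}_j(z) := \eep^{-m_\alpha(B_j)}\, g_j(b_1 \eep^{\alpha_1} z_1, \dots, b_d \eep^{\alpha_d} z_d), \quad j \in [r]. \]
Since $B_j \subset \ZZ_{\geq 0}^d$ and $\langle \alpha, q\rangle \geq m_\alpha(B_j)$ for all $q \in B_j$, each $\tilde{g}_j$ lies in $\cO[z_1, \dots, z_d]$. The assumption $\init_\alpha(g_j)(b) = 0$ gives $\val(\tilde{g}_j(1,\dots,1)) > 0$, while a direct computation shows that $J_{\tilde g}(1,\dots,1)$ modulo $\fm$ equals $J_{\init_\alpha(g)}(b) \cdot \mathrm{diag}(b_1,\dots,b_d)$, which has rank $r$. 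Fixing constants $\eta_1,\dots,\eta_r \in \K^*$ and choosing indices $i_1,\dots,i_r$ so that the corresponding $r \times r$ submatrix of $J_{\init_\alpha(g)}(b)$ is invertible, I would complete to a square system by adjoining $z_{i_{r+k}} - 1 = 0$ for $k=1,\dots,d-r$. The resulting system
\[ H_j(z) := \tilde{g}_j(z) - \eta_j\, \eep^{\ww_j - m_\alpha(B_j)}\ (j \in [r]), \qquad H_{r+k}(z) := z_{i_{r+k}} - 1 \]
lies in $\cO[z]$, satisfies $\val(H_j(1,\dots,1)) > 0$ (using the strict inequality $\ww_j > m_\alpha(B_j)$) and $\val(\det J_H(1,\dots,1)) = 0$ by its block-triangular structure. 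Hensel's lemma then produces a unique $z \in \cO^d$ with $\val(z_i - 1) > 0$ for every $i$, and setting $\sigma_i := b_i\, \eep^{\alpha_i}\, z_i$ delivers the required point, since then $g_j(\sigma) = \eep^{m_\alpha(B_j)}\tilde g_j(z) = \eta_j\eep^{\ww_j}$.

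For part \ref{lemitm:orderb}, I would partition $[r] = S \sqcup T$ with $S = \{j : \ww_j = m_\alpha(B_j)\}$ and $T = [r]\setminus S$. Using the genericity of the coefficients of $g_1,\dots,g_r$, I would select $b \in (\K^*)^d$ that is a common zero of $\{\init_\alpha(g_j)\}_{j \in T}$ with Jacobian of full rank $|T|$, while $\init_\alpha(g_j)(b) \neq 0$ for every $j \in S$. Applying part \ref{lemitm:ordera} to the subsystem $\{g_j\}_{j \in T}$ at $b$ with the prescribed values $\ww_j$ (which satisfy $\ww_j > m_\alpha(B_j)$ on $T$) produces $\sigma \in (\KK^*)^d$ with $\val(\sigma) = \alpha$, $\init(\sigma) = b\,\eep^\alpha$, and $\val(g_j(\sigma)) = \ww_j$ for every $j \in T$. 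For $j \in S$, Lemma~\ref{lem:initialForms} gives $g_j(\sigma) = \init_\alpha(g_j)(b)\,\eep^{m_\alpha(B_j)} + \hot(\eep)$, whose valuation is exactly $m_\alpha(B_j) = \ww_j$ since $\init_\alpha(g_j)(b) \neq 0$.

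The main obstacle lies in justifying the existence of the appropriate $b$ in part \ref{lemitm:orderb}: under genericity, one must ensure that the $|T| \leq d$ face polynomials $\{\init_\alpha(g_j)\}_{j \in T}$ admit a common zero in $(\K^*)^d$ with full-rank Jacobian along which none of the face polynomials indexed by $S$ vanish. This combinatorial existence statement rests on Bernstein--Kushnirenko-type arguments applied to the supports $\face_\alpha(B_j)$ together with the openness of the non-vanishing conditions for $j \in S$. The Hensel step itself is routine once the square system has been correctly assembled.
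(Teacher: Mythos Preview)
Your approach is essentially the paper's. For part~\ref{lemitm:ordera} the paper substitutes $x\mapsto\eep^\alpha x$ (omitting your factor $b$, so it works at $b$ rather than at $(1,\dots,1)$) and completes to a square system with $d-r$ generic affine linear forms through $b$ instead of freezing coordinates; the Hensel step is identical. For part~\ref{lemitm:orderb} the paper makes the same split (its $I$ is your $T$) and fills the obstacle you flag with a concrete mixed-volume monotonicity argument: the hypothesis that the generic initials $\init_\alpha(g_1),\dots,\init_\alpha(g_r)$ have a common torus zero forces $\MV(\face_\alpha(B_1),\dots,\face_\alpha(B_r),\Delta_d^{(d-r)})>0$, and enlarging $\face_\alpha(B_i)$ to $\face_\alpha(B_i)\cup\{0\}$ for each $i\in S$ keeps it positive; Bernstein's theorem then yields a simple torus zero $c$ of the system $\init_\alpha(g_i)=0$ for $i\in T$, $\init_\alpha(g_i)+\lambda_i=0$ for $i\in S$ (generic $\lambda_i\in\K^*$), cut by $d-r$ generic affine linear forms. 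This $c$ is exactly the point you need, and the paper then applies Hensel directly to this full square system rather than routing through part~\ref{lemitm:ordera}.
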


\begin{proof}
Write $g_i=\sum_{a \in B_i}c_{i,a}x^a$ and
denote $\overline{g}_i(x)=\eep^{-m_{\alpha}(B_i)}
g_i(\eep^{\alpha}x)$, $\eep^{\alpha}x =(\eep^{\alpha_1}x_1,
\dots, \eep^{\alpha_d}x_d)$. By Lemma~\ref{lem:initialForms}, we
know that $\overline{g}_i(x)\in \cO[x_1,\dots,x_d]$,
$\init_{0}(\overline{g}_i)=\init_{\alpha}(g_i)$ 
for all $i \in  [r]$ and $J_{\init_0(\overline{g})}=
J_{\init_{\alpha}(g)}$. Therefore, we can assume that $\alpha=0$
and $m_{\alpha}(g_i)=0$ with $g_i\in \cO[x_1, \dots, x_d]$ for all
$i\in  [r]$.

To prove item \ref{lemitm:ordera}, under these assumptions on
$\alpha$ and the polynomials $g_1, \dots, g_r$, consider the
polynomials $h_i=g_i-\eep^{w_i}$ for every $ i \in [d]$ 
and $w \in (\RR_{>0})^r$. By Lemma~\ref{lem:initialForms},
$\val(h_i(b))>0$ for all $ i \in [r]$. Since
$\init_0(h_i)=\init_0(\overline{g}_i)$ and
$J_{\init_0(\overline{g})}(b)$ has rank $r$, for any $d-r$ generic
affine linear forms $\ell_1, \dots, \ell_{d-r} \in \K[x_1, \dots, x_d]$
that vanish on $b$, the Jacobian matrix $J_{\init_0(h),l}$ of
the family of polynomials $\init_0(h_1), \dots, \init_0(h_r), \ell_1,
\dots, \ell_{d-r}$ is invertible when evaluated at $b$. Consider now
the Jacobian matrix $J_{h,l}$ of the family of polynomials $h_1,
\dots, h_r, \ell_1, \dots, \ell_{d-r}$. Since
$\det(J_{h,l}(b))=\det(J_{\init_0(h),l}(b))+{\hot}(\eep)$ and
$\det(J_{\init_0(h),l}(b))\in \K^*$, $\val(\det(J_{h,l}(b)))=0$.
Then, by Hensel's Lemma there exist $\sigma \in \cO^n$ such that
$\init(\sigma)=b$ and $h_i(\sigma)=0$ for all $ i \in [r]$, and
so $g_i(\sigma)=\eep^{\ww_i}$.

To prove item \ref{lemitm:orderb}, note that for a
generic point $c \in (\K^*)^d$ we have
$\val(g_i(c))=0$ for all $ i \in [r]$, thus
$\ww=0\in \RR^r$ satisfies the statement. Consider now any $\ww
\in (\RR_{\ge0})^r$. We need to show that there exists a point
$\sigma \in (\KK^*)^d$ with $\val(\sigma)=0$ and such that
$\val(g_i(\sigma))=\ww_i$ for all $ i \in [r]$.
Consider the set $I=\{i \in [r] \ | \ \ww_i>0\}$.
Since  $\init_0(g_1), \dots, \init_0(g_r)$ are generic polynomials
and have a common zero $b \in (\K^*)^d$, the solution set in
$(\K^*)^d$ is an equidimensional variety of dimension $d-r$. If we
denote by $(B_1)_0, \dots, (B_r)_0$ the supports of these initial
polynomials, $\Delta_d=\{0, e_1, \dots, e_d\}$ the set with $e_i$
the $i$-th vector of the canonical basis of $\RR^d$ and
$\Delta_d^{(d-r)}$ when it is repeated $d-r$ times, then the mixed
volume $MV((B_1)_0, \dots, (B_r)_0,\Delta_d^{(d-r)})$ is
a positive integer. By the monotony of the mixed volume,
$MV(\{(B_i)_0\}_{i \in I}, \{(B_i)_0\cup\{0\}\}_{i\not\in
I}, \Delta_d^{(d-r)})$ is also positive. That is, for $\ell_1, \dots,
\ell_{d-r}$ generic affine linear forms in $\K[x_1, \dots, x_d]$ and
$\{\lambda_i\}_{i \not\in I}\subset \K^*$ generic numbers, 
the polynomials $\init_0({h}_1), \dots,
\init_0({h}_{d})$ have an isolated simple common zero
$c\in(\K^*)^d$, where
$${h}_i(x)=\begin{cases}{g}_i(x) \mbox{ for
every } i \in I \\ {g}_i(x)+\lambda_i \mbox{ for every }
i \not\in I, i \le r \\ \ell_{i-r}(x) \mbox{ for every } r < i \le
d\end{cases}.$$ Consider now the polynomials $h'_1, \dots, h'_d \in
\cO[x_1, \dots, x_d]$ defined as
$h'_i(x)={h}_i(x)-\eep^{\ww_i}$ if $i \in I$ and
$h'_i(x)={h}_i(x)$ otherwise. Since $c\in
(\K^*)^d \subset \cO^d$ satisfies $\val(h'_i(c))>0$ for
all $  i \in [d]$ and
$J_{\init_0(h')}(c)=J_{\init_0({g}),l}(c)$,
then $\val(J_{h'}(c))=0$. Then, by Hensel's Lemma
there exists $\sigma$ such that $\init(\sigma)= c$ and
$h'_i(\sigma)=0$ for all $ i \in [r]$, or equivalently,
$g_i(\sigma)=\eep^{w_i}$ for all $\ww_i>0$ and
$g_i(\sigma)=-\lambda_i\in \K^*$ for all $ \ww_i=0$.
\end{proof}

\subsection{The proof of Theorem~\ref{thm:tropset}}\label{ssec:proof}

We first show that $\cT (I)$ is contained on the union of
the cones $C_J^F$. We consider for any $J \subseteq [n]_0$ the cone
$C_J = \cup_F C^F_J$, where the union is over all coherent collections $F$ of faces of $(A_0,\dots, A_n)$ 
that are adapted to $J$.\footnote{Unlike the cones $C_J^F$, the cones $C_J$ are not
necessarily convex.}
Then, 
\begin{itemize}
\item[*] if $0 \notin J$,
\[ C_J:= \{ \psi(\alpha) + u, \, \alpha \in \tau_J, \mbox{ and } u \in (\RR_{\ge 0})^J\},\]
\item[*] if $0 \in J$, set $J'= J - \{0\}$ and
\[ C_J:= \{ \psi(\alpha) + u - \lambda \, {\bf 1}, \, \alpha \in \tau_J,
 u \in (\RR_{\ge 0})^{J'}, \mbox{ and } \lambda \in \RR_{\geq 0}\},\]
\end{itemize}
where 
$\tau_J:= \{ \alpha \in \RR^d / \dim(\sum_{j \in K}\face_\alpha
(\conv(A_j))) \ge |K|, \forall \, K \subseteq J\}.$

 By the Fundamental Theorem of Tropical Geometry, it suffices then 
 to show that $\val(\ff_{\KK}(\sigma))$ lies in one of
  the cones $C_J$ for $\sigma$  in $(\KK^*)^d\backslash
  \{\prod_{i=0}^nf_i=0\}$. Let $\alpha = \val(\sigma)$, with $\sigma =
  (b_{1}\,\eep^{\alpha_1}+ \hot(\eep),\dots, b_{d}\,\eep^{\alpha_d}+
  \hot(\eep))$ and $b=(b_{1}, \dots, b_{d})\in (\K^{*})^d$.
  Consider the initial forms $\init_{\alpha}(f_i)$. If
  $\init_{\alpha}(f_0)(b)\neq 0$, take $J$ as the subset of $[n]$
  of those indices for which $\init_{\alpha}(f_i)(b)=0$. 
    
  Since
  $b\in (\K^*)^n$ is a common zero of the generic polynomials
  $\init_{\alpha}(f_i))_{i \in J}$, for all $\alpha \in \tau_J$,
  Lemma~\ref{lem:initialForms} implies that 
  $\val(f_i(\sigma))= m_i(\alpha)$ for all $i \in [n]_{0} \backslash J$ and
  $\val(f_i(\sigma))> m_i(\alpha)$ for all $i \in J$, so $\val(\ff_{\KK}(\sigma))=
  \psi(\alpha)+u \in C_J$ where $\alpha\in \tau_J$ and $u \in \RR^J$.
  On the contrary, if $\init_{\alpha}(f_0)(b)=0$, this means that
  $\val(f_0(\sigma))=m_0(\alpha)+\lambda > m_0(\alpha)$. Take $J'$
  be the set of indices $j\geq 1$ satisfying $\init_\alpha(f_j)(b)=0$ and $J=J'\cup\{0\}$,
  it follows by the same argument that
  $\val(\ff_{\KK}(\sigma)) \in C_{J}$.

\medskip

Our next task is to prove that the cones $C_J$ lie in $\cT (I)$
if the polynomials $f_i$ have generic coefficients.
Let $\alpha \in
\tau_J$. Since the polynomials $f_i$ are generic, we may assume
that there exists $b\in (\K^*)^d$ with $\init_{\alpha}(f_j)(b)=0$
if and only if $j\in J$, with the additional property that the
Jacobian matrix $\Jac_{(in_\alpha(f_j))_{j\in J}}(b)$ associated
to those initial polynomials has maximal rank. In particular,
$\init_{\alpha}(f_i (b))\neq 0$ for every $i \in [n]_0\backslash
J$ and, by Lemma~\ref{lem:initialForms},
$\val(f_i(\sigma))=m_\alpha(P_i)$ for every $\sigma$ with
$\init(\sigma)=b\,\eep^{\alpha}$.

First, assume $0\notin J$ and consider $\psi(\alpha)+u \in C_J$
where $u \in (\RR_{>0})^J$ and $\ww\in \RR^{n}$ defined as $\ww=
\psi(\alpha)+u+ m_\alpha(P_0)(1, \dots, 1).$
Applying Lemma~\ref{lem:order} to the polynomials (and coordinates
of $\ww$) indexed by $J$, there exists $\sigma \in (\KK^*)^d$ with
$\init(\sigma)=b\eep^{\alpha}$ and $\val(f_i(\sigma))=\ww_i$ for
every $i \in J$. Hence,
$\val(\ff_{\KK}(\sigma))=\ww-m_\alpha(P_0)(1, \dots,
1)=\psi(\alpha)+u$. Since $\cT(I)$ is a closed set, we deduce
that $C_J\subset \cT (I)$.
In case $0\in J$, consider
$\psi(\alpha)+u-\lambda(1, \dots, 1)\in C_J$ where $u \in
(\RR_{>0})^{J'}$ and $\lambda>0$ and $\ww = (\ww_0, \dots, \ww_n)
=(m_\alpha(P_0)+\lambda, m_\alpha(P_1)+u_1, \dots,
m_\alpha(P_n)+u_n)\in \RR^{n+1}$.
Applying Lemma~\ref{lem:order} to the polynomials $\{f_i\}_{i \in
J}$ (and the corresponding coordinates of $\ww$), there exists
$\sigma \in (\KK^*)^d$ with $\init(\sigma)=b\eep^{\alpha}$ and
$\val(f_i(\sigma))=\ww_i$ for all $i \in J$. Then
$\val(\ff_{\KK}(\sigma))= \psi(\alpha)+u-\lambda(1, \dots, 1).$

 \subsection{An example}
 \label{sec:example}

We apply Theorem~\ref{thm:tropset} to the simple Example~\ref{ex:generic} 
with generic polynomials and we then consider in Example~\ref{ex:nongeneric} polynomials
with the same supports but non-generic coefficients. These examples  are addressed in terms
of geometric tropicalization in Examples~4.3 and~5.3 in~\cite{Cueto}.
In both cases, we consider polynomials $f_0,\dots, f_3$ with the following 
respective supports $A_0, \dots, A_3\subset(\ZZ_{\ge0})^2$:
$A_0= \{0\}, A_1 = \{ (2,0), (3,0), (0,2)\}$, $A_2= \{(0,2), (0,3), (2,0)\}$, $A_3 =
\{(1,1), (0,3), (1,2), (2,1), (3,0)\}$. We depict the corresponding Newton polytopes
$P_i=\conv{(A_i)}$ for  $i =1, 2, 3$:
\begin{center}
\begin{tikzpicture}
\draw (0,4) -- (0,0) -- (4,0); \draw (0,3) node {$-$} (0,2) node
{$-$} (0,1) node {$-$} (1,0) node {$|$} (2,0) node {$|$} (3,0)
node {$|$}; \draw (-0.2,1) node {\small{1}} (-0.2,2) node
{\small{2}} (-0.2,3) node {\small{3}}; \draw (1,-0.4) node
{\small{1}} (2,-0.4) node {\small{2}} (3,-0.4) node {\small{3}};
\draw[fill, red] (0,2) -- (2,0) -- (3,0) -- (0,2); \draw (1.8,1.8)
node {$P_1$};
\end{tikzpicture}
\begin{tikzpicture}
\draw (0,4) -- (0,0) -- (4,0); \draw (0,3) node {$-$} (0,2) node
{$-$} (0,1) node {$-$} (1,0) node {$|$} (2,0) node {$|$} (3,0)
node {$|$}; \draw (-0.2,1) node {\small{1}} (-0.2,2) node
{\small{2}} (-0.2,3) node {\small{3}}; \draw (1,-0.4) node
{\small{1}} (2,-0.4) node {\small{2}} (3,-0.4) node {\small{3}};
\draw[fill, green] (2,0) -- (0,2) -- (0,3) -- (2,0); \draw
(1.8,1.8) node {$P_2$};
\end{tikzpicture}
\begin{tikzpicture}
\draw (0,4) -- (0,0) -- (4,0); \draw (0,3) node {$-$} (0,2) node
{$-$} (0,1) node {$-$} (1,0) node {$|$} (2,0) node {$|$} (3,0)
node {$|$}; \draw (-0.2,1) node {\small{1}} (-0.2,2) node
{\small{2}} (-0.2,3) node {\small{3}}; \draw (1,-0.4) node
{\small{1}} (2,-0.4) node {\small{2}} (3,-0.4) node {\small{3}};
\draw[fill, blue] (3,0) -- (1,1) -- (0,3) -- (3,0); \draw
(1.8,1.8) node {$P_3$};
\end{tikzpicture}
 \end{center}

\begin{example}[Generic coefficients] \label{ex:generic} Consider
the parametrization $\mathbf{f}:(\CC^*)^2 \dashrightarrow
(\CC^*)^3$ defined by the polynomials
 \begin{eqnarray*}
 f_{0} &=& 1\\
 f_{1} &=& a_1x_1^{2} + b_1x_2^{2} + x_1^{3}\\
 f_{2} &=& a_2x_2^{2} + b_2x_1^{2} + x_2^{3}\\
 f_{3} &=& x_1x_2 + a_3x_1^3 + b_3x_1^2x_2 + c_3x_1x_2^2 + d_3x_2^3,\\
 \end{eqnarray*}
where $a_1, b_1, a_2 ,b_2, a_3,b_3, c_3, d_3$
are generic nonzero elements of $\K$.
Applying Theorem~\ref{thm:tropset}, we can check that:
\begin{itemize}
\item For $J\subset [3]_0$ with at least 3 elements, there
is no possible coherent collection of faces adapted to $J$.
\item For $\#J=2$ the only coherent collection of faces adapted to
$J$ is $(A_0, A_1, A_2, A_3)$ and the associated cones are:
$C_1=\mathbb{R}_{\ge 0}r_1 + \mathbb{R}_{\ge 0}r_2, 
C_2=\mathbb{R}_{\ge 0}r_1 + \mathbb{R}_{\ge 0}r_3,
C_3=\mathbb{R}_{\ge 0}r_2 + \mathbb{R}_{\ge 0}r_3$,
 defined by the
rays $r_1=(1,0,0),\ r_2=(0,1,0)$ and $r_3=(0,0,1)$.
\item For $\#J=1$ we obtain the cones
$C_4=\mathbb{R}_{\ge 0}r_3 + \mathbb{R}_{\ge 0}r_4, 
C_5=\mathbb{R}_{\ge 0}r_1 + \mathbb{R}_{\ge 0}r_5, 
C_6=\mathbb{R}_{\ge 0}r_2 + \mathbb{R}_{\ge 0}r_5$,  where we
define the rays $r_4=(-1,-1,-1) \mbox{ and } r_5=(1,1,1),$ and
$C_{a,1} = \mathbb{R}_{\ge 0}r_1 + \mathbb{R}_{\ge 0}r_6, 
C_{a,2} = \mathbb{R}_{\ge 0}r_2 + \mathbb{R}_{\ge 0}r_7, 
C_{a,3}=\mathbb{R}_{\ge 0}r_3 + \mathbb{R}_{\ge 0}r_8$, where $r_6
= (-2,-3,-3), \ r_7 = (-3,-2,-3)$ and $r_8 = (2,2,3)$.
\item Finally, for $J=\emptyset$, the image of $\psi$ give us the cones
$C_{b,1} = \mathbb{R}_{\ge 0}r_4 + \mathbb{R}_{\ge 0}r_6,
 C_{b,2} = \mathbb{R}_{\ge 0}r_4 + \mathbb{R}_{\ge 0}r_7$.
Here, the union of cones $C_{a,i}\cup C_{b,i}=C_{i+6}$ for 
$i=1, 2, 3$ yields the cones
$C_7=\mathbb{R}_{\ge 0}r_1 + \mathbb{R}_{\ge 0}r_4 , 
C_8=\mathbb{R}_{\ge 0}r_2 + \mathbb{R}_{\ge 0}r_4, 
C_9=\mathbb{R}_{\ge 0}r_3 + \mathbb{R}_{\ge 0}r_5$.
\end{itemize}
\begin{figure}[h!]
 \includegraphics[width=10cm]{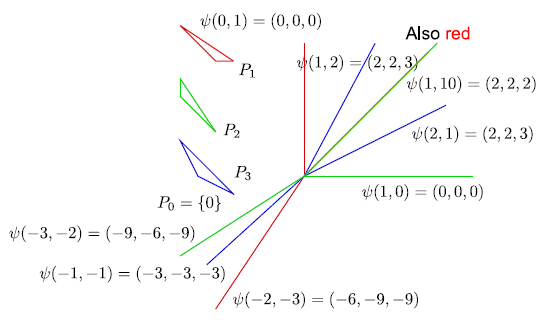}
  \caption{The image by $\psi$ of the cones spanned by $\{(-3, 2),(0, 1)\},$
 $\{(0, 1), (1, 2)\},$ $\{(2, 1), (1, 0)\},$
 $ \{(1, 0), (-2,-3)\}$ is one dimensional.}
 \end{figure}
In fact, the Newton polytope of a (reduced) polynomial $H$ that defines the
closure of the image of $\ff$ is a truncated simplex with vertices
$\{(4,0,0),(0,4,0),(0,0,4),(9,0,0),(0,9,0),(0,0,9)\}$. The cones $C_1$ to $C_9$
correspond to the $9$ edges of $N(H)$. We explain how to compute a priori
the multiplicities (the lengths of the edges) in \S~\ref{sec:mult}.
\end{example}

\begin{example}[Non-generic coefficients]\label{ex:nongeneric}
Consider now the rational map $\ff:(\CC^*)^2 \dashrightarrow
(\CC^*)^3$ associated to the polynomials
 \begin{eqnarray*}
 f_{0} &=& 1\\
 f_{1} &=& x_1^{2} - x_2^{2} - x_1^{3}\\
 f_{2} &=& x_2^{2} - x_1^{2} - x_2^{3}\\
 f_{3} &=& {4x_1x_2} - (x_1 + x_2)^{3}.\\
 \end{eqnarray*}
The union of the zeros of these
$f_i$'s (that is, the base locus of $\ff$) has a highly singular point at the origin: 
\begin{center}
\includegraphics[width=7cm]{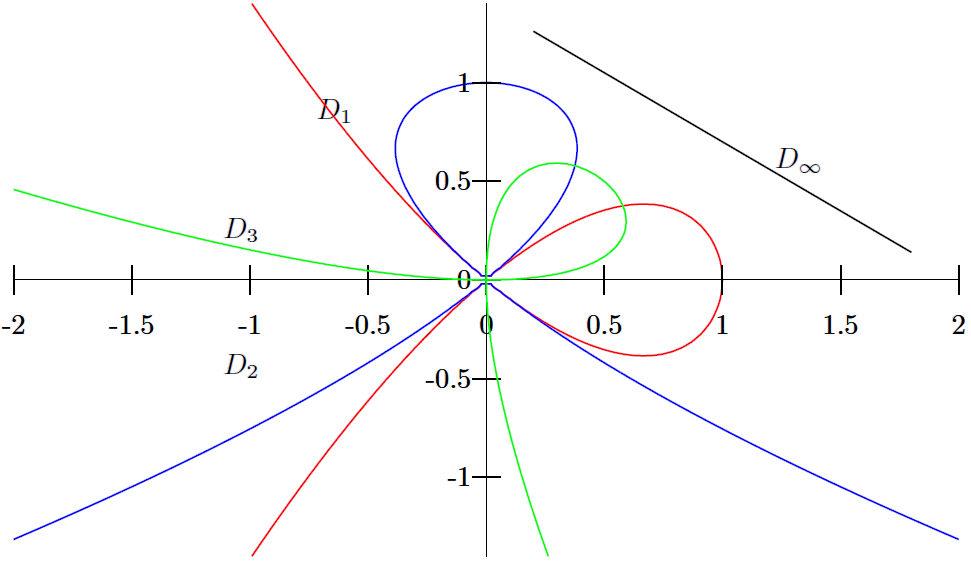}
\end{center}
Let $H$ be a (reduced) polynomial that defines the hypersurface
parameterized by $\ff$. Its Newton polytope $N(H)$ is the
pyramid in $(\ZZ_{\ge 0})^3$ with equation $X+Y+Z\le 9$ cut by the
two half-spaces $3\, X + 3\, Y + 2\, Z \ge 12$ and $2\, X + 2\, Y +  \, Z \ge 7 $.
 Note that it is contained in the convex hull $P$ of the support
 in the generic case in Example~\ref{ex:generic}. 
 We depict $N(H)$ on the right and $P$ on the left:
 \begin{center}
\includegraphics[width=4cm]{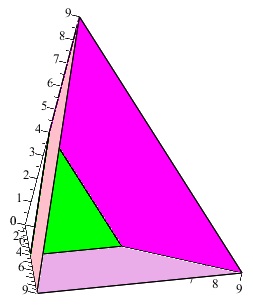} \quad \quad \quad
\includegraphics[width=4cm]{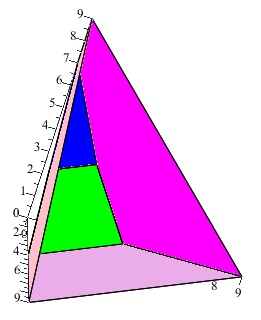}
\end{center}
The tropicalization $\cT(H)$, instead, is not equal to the union of
cones of Theorem~\ref{thm:tropset}, but bigger. However, we can describe some
of the cones in $\trop{H}$ by considering the approach in the
proof of Theorem~\ref{thm:tropset} and using it as a recipe to
find the tropicalization.

In Theorem~\ref{thm:tropset}, to prove that a cone $C_J$ lies in $\mathcal{T}(I)$, 
the genericity condition is used to ensure that there is a common zero $b \in (\CC^*)^{n-1}$ of
$\init_\alpha(f_j)$ if and only if $j\in J$ and that it satisfies
all hypotheses from Lemma~\ref{lem:order}. But this Lemma does not
require that the polynomials are generic. Using this idea, we
consider for all possible $J$ the common zeros of
$\{\init_\alpha(f_j)\}_{j \in J}$ that are not zeros of any
polynomial in $\{\init_\alpha(f_j)\}_{j \not\in J}$.
If either $0 \in J$ or $\#J\ge 3$, then there is no possible
$\alpha$ such that $\{\init_\alpha(f_j)\}_{j \in J}$ have common
zeros. 
If $J=\emptyset$, for all $\alpha \in \RR^2$ there are points $b
\in (\CC^*)^2$ that are not zeros of any $\init_\alpha(f_j)$.
Then, as in the generic case (see Example~\ref{ex:generic}),
\begin{equation}\label{eqref:exC_{b,i}}
\psi(\RR^2)= C_{b,1}\cup C_{b,2}\cup C_{b,3}
\end{equation}
and this set is included in $\mathcal{T}(I)$.
If $J {\in} \{\{1,3\},\{2,3\}\}$ the only vector $\alpha \in \RR^2$ for
which there are common zeros of $\{\init_\alpha(f_j)\}_{j \in J}$ that are not
zeros of {any} $\init_\alpha(f_j)$ for all $j \not\in J$ or of
$\det(\mathcal{J}_{\{\init_\alpha(f_j)\}_{j \in J}})$ is $\alpha =
(0,0)$.  {For $J =\{1,2\}$, a possible vector is $\alpha = (0,0)$ (although is not the only one).}
Then, ${C_1\cup C_2 \cup C_3\subset \mathcal{T}(I)}$.
These 3 are in fact the cones associated to the edges of ${N(H)}$ lying in the
coordinate axes.

When looking at $J{=}
\{1,2\}$, we also need to consider $\alpha=(1,1)$. There is no
zero of $\init_{(1,1)}(f_1)$ that is not a zero of
$\init_{(1,1)}(f_2)$. 
Consider $\sigma = (\varepsilon^v(1+s_1), \varepsilon^v(1+t_1))$,
where $v>0, (s_1,t_1)\in \KK^2$ and $\val(s_1),\val(t_1)> 0$. Then
\begin{eqnarray*}
 f_{1}(\sigma) &=&  \varepsilon^{2v}( 2 s_{1}- 2 t_{1} + s_{1}^{2} - t_{1}^{2} - \varepsilon^v (1+s_{1})^{3})\\
 f_{2}(\sigma) &=& -\varepsilon^{2v}( 2 s_{1}- 2 t_{1} + s_{1}^{2} - t_{1}^{2} + \varepsilon^v (1+t_{1})^{3})\\
 f_{3}(\sigma) &=&  \varepsilon^{2v}( 4 (1+s_{1})(1+t_{1}) - \varepsilon^v (2+s_{1}+t_{1})^{3}).
 \end{eqnarray*}%
If $2 s_{1}- 2 t_{1} + s_{1}^{2} - t_{1}^{2}\ne 0$ and  we
choose $s_1=a\varepsilon^\omega + s_2,t_1= b\varepsilon^\omega +
t_2$ with $0 < \omega < v= \omega + v'$ with $v'>0$, then
$\val(f_{1}(\sigma)) =  \val(f_{2}(\sigma))= 2v' +
3\omega $ and $\val(f_{3}(\sigma))=2v'+ 2\omega.$ Thus,
 the cone $\RR_{\ge 0}(1,1,1) +
\RR_{\ge0}(3,3,2)$ is contained in $\mathcal{T}(I)$. The union of this cone
with ${C_9\subset \mathcal{T}(I)}$ gives the entire
cone corresponding to the edge $\overline{(4,0,0)(0,4,0)}$.
If $J = \{1\}$, we analyze those $\alpha$ such that face$_\alpha(A_1)$
has dimension 1. 
When $\alpha = (0,1)$ we obtain the cone $\RR_{\ge 0}(1,0,0)
\subset C_1$ which we already have.
When $\alpha = (-2,-3)$ there are zeros of
$\init_{(-2,-3)}(f_1) = x_2^2+x_1^3$ that are not zeros of
$\init_{(-2,-3)}(f_2) = \init_{(-2,-3)}(f_3) = x_2^3$ or
$\mathcal{J}_{\init_{(-2,-3)}(f_1)} = (3x_1^2 \ 2x_2)$ and we get
the cone $C_{a,1}$. Recall that (with the notation of
Example~\ref{ex:generic}) $C_{a,i}\cup C_{b,i} = C_{6+i}$.
Interestingly and as in the generic case, since we already
obtained in~\eqref{eqref:exC_{b,i}} the cone $C_{b,1}$ we have
that $C_{a,1}\cup C_{b,1} = {C_7\subset \mathcal{T}(I)}$. In
the same way, for $J=\{2\}$ we obtain that
${C_8\subset \mathcal{T}(I)}$.  These cones correspond to
the edges $[(9,0,0)(0,0,9)]$  and $[(0,9,0)(0,0,9)]$ of $N(H)$.
The set $J=\{3\}$ is  a bit more complicated to analyze. If we consider all faces
face$_\alpha(A_1)$ with dimension 1, both $\alpha=(1,2)$ and
$\alpha=(2,1)$ gives the cone $C_{a,3}$. Via the cones obtained
in~\eqref{eqref:exC_{b,i}}, $C_9\subset \mathcal{T}(I)$. As we
mentioned before, joining $C_9$ with $\RR_{\ge 0}(1,1,1) +
\RR_{\ge0}(3,3,2)$ we obtain the cone
${\RR_{\ge0}(0,0,1) + \RR_{\ge0}(3,3,2)\subset
\mathcal{T}(I)}$
 associated to the edge $[(4,0,0), (0,4,0)]$.
 
Finer computations are required to find the other five
cones associated to the edges of the two new faces of the Newton
polytope of $H$, as it is necessary to consider higher order terms in $\sigma$. 
\end{example}

\subsection{The multiplicities}\label{sec:mult}

We complete the description of
the tropicalization of the closure of the image of $\ff$ with the
computation of the multiplicities of the maximal cones in
Theorem~\ref{thm:tropset}.

  
Recall that as the variety $\overline{\im \ff}$ is
irreducible of dimension $d$, $\mathcal{T}(\overline{\im \ff})$
is a polyhedral set that can be given the structure of a pure $d$-dimensional fan.
Theorem~\ref{thm:tropset} gives a description of
$\mathcal{T}(\overline{\im \ff})$ as a set, and as in \cite{STY07}, this description does not have
a natural fan structure. However, every maximal dimensional cone $\sigma$ in $\mathcal{T}(\overline{\im \ff})$ has
a multiplicity $\rm{mult}(\sigma) \in \ZZ_{>0}$. 
This multiplicity can be computed at any regular point $w$ in the cone,
by means of \cite[Theorem 3.12]{ST08} and \cite[Theorem 5.1]{ST08}.

Consider again the homomorphism of tori $\rho$ in~\eqref{eq:rho}.
Given a regular point $w$ in a maximal cone $\sigma$ of
$\mathcal{T}(\overline{\im \ff})$ such that $f^{-1}(w)$ is a
finite set of regular points in $\mathcal{T}(\overline{\im
\mathbf{F}})$, $m_w=\rm{mult}(\sigma)$ equals the sum over all $v \in \ff^{-1}(w)$, of
the quantities
\begin{equation*} \label{eq:mult} \frac{1}{\deg(\rho)}m_v\cdot
\rm{index}(\mathbb{L}_w\cap \ZZ^n : A(\mathbb{L}_v\cap \ZZ^d)),
\end{equation*}
where the multiplicity $m_v$ of the regular point $v$ in
$\mathcal{T}(\overline{\im \mathbf{F}})$ is as computed in
\cite[Theorem 5.1]{ST08}.

Consider the map $\Psi^{ST}(\alpha) =
(m_\alpha(A_0), \dots, m_\alpha(A_n))$ defined as in~\eqref{eq:7},
 but for the rational map $\mathbf{F}$). For every
$J \subset [n]_0$ with $|J|\ge d$ and every coherent collection of
faces $F=(F_0, \dots, F_n)$ of $(A_0, \dots, A_n)$ adapted to $J$,
define index$(F,J)$ as the index of the lattice $\Psi^{ST}(C^F\cap
\ZZ^d) + \ZZ^J$ in its saturated lattice, when both have rank $d$.
Otherwise, index$(F,J)=0$.  
With this notation, the multiplicity $m_v$ of the regular point $v$ in
$\mathcal{T}(\overline{\im \mathbf{F}})$  is equal to the sum of
\begin{equation*}
\frac{1}{\deg(\mathbf{F})}\rm{index}(F,J)\cdot\MV(F_i\ | \ i \in
J)
\end{equation*} over all sets $J \subset [n]_0$ and all coherent
collection $F$ of faces of $A_0, \dots, A_n$ adapted to $J$ such
that $ \Psi(C^F) + \RR_{\ge 0}^J $ contains $v$. Here, 
$\MV(F_i\ | \ i \in J)$ denotes the $|J|$-dimensional mixed volume. 

The following is a simple example of the multiplicity computation via
this formula.

\begin{example} \label{ex:mult}
Let $f_0, f_1, f_2 \in \K[x]$ be the polynomials
$$f_0 = x^3 + 3x, \ \ f_1 = x^5 + 5x^3 \ \mbox{ and } \ f_2 = x^5
+ 22x^3 + 17x.$$ The rational map $\ff$ has degree 2 and the cones
in the tropicalization of its image are $C_{\{1\}} = \psi(\RR_{\ge
0}) = \RR_{\ge 0} (1,0);\ \ C_{\{2\}} = \RR_{\ge 0} (0,1)$ 
and $C_{\{0\}} = \psi(\RR_{\le 0}) = \RR_{\ge 0} (-1,-1).$ The
map $\mathbf{F}$ has degree 1 and the tropicalization of its image
has $5$ cones. The following table shows the multiplicity $m_w$ of a
regular point $w$ in each of them (and the data used to compute it):

{\large\begin{center}
\begin{tabular}{|c|c|c|c|c|} \hline
$ w $                    & $v \in \rho^{-1}(w)$ & $m_v$ &
${\small\rm{index}(\mathbb{L}_w\cap \ZZ^2 : A(\mathbb{L}_v\cap \ZZ^3))}$ & $m_w$ \\
\hline
                         & $ (\lambda,0,0)    $ & $ 2 $ & $ 1 $ &       \\
$ (-\lambda, -\lambda) $ & $                  $ &       &       & $ 2 $ \\
                & $ \frac{-\lambda}{2}(3,5,5) $ & $ 1 $ & $ 2 $ &       \\
                \hline
                         & $ (0,\lambda,0)    $ & $ 2 $ & $ 1 $ &       \\
$ (\lambda,0) $          &                      &       &       & $ 2 $ \\
                 & $ \frac{\lambda}{2}(1,3,1) $ & $ 1 $ & $ 2 $ &       \\
                 \hline
$ (0,\lambda) $          & $ (0,0,\lambda) $ & $ 4 $ & $ 1 $ & $ 2 $ \\
\hline
\end{tabular}.
\end{center}}

For example, take a regular point $w = (\lambda, 0) \in \RR_{\ge
0}^2$. The set $\ff^{-1}(w)$ has two points in $\cT(\overline{\im
\mathbf{F}})$: the point $v_1 = (0, \lambda, 0)$ which is in a cone
of multiplicity 2 and the point $v_2 = (\lambda/2, 3\lambda/2,
\lambda/2)\}$ which lies in a cone of multiplicity 1. Since
$A(0,1,0)=(1,0)$, the index is 1, but since $A(1,3,1) = 2(1,0)$,
the corresponding index is 2. By equation \eqref{eq:mult}, $m_w =
2$.
\end{example}

\section{Degree of rational varieties under sparse parametrizations}
\label{sec:degr-impl-equat}

As before, we consider 
 $n+1$ supports $A_0, \dots, A_n$ which
lie in the nonnegative orthant $(\ZZ_{\ge 0})^{d}$ and generic
polynomials $f_0, \dots, f_n$ with respective supports $A_0,\dots,
A_n$,  such that the rational map from~\eqref{eq:def3}
$$\ff=\left(\frac{f_1}{f_0}, \dots, \frac{f_n}{f_0}\right)$$ is
generically finite and thus rationally parameterizes 
an irreducible variety $S$ of dimension $d$. We give in Theorem~\ref{thm:degS} an explicit formula
for its degree in terms of the supports when the coefficients are generic,
and which is an upper bound in any case. 
Our sharp bound for the degree of $S$ in Theorem~\ref{thm:degS}  for generic 
coefficients is similar to the bound in Proposition~4 in \cite{HJS19}
in case $f_0 = 1$. The upper bound given by the volume of the convex hull of the union of the supports
appears in Section~3 in~\cite{EK03}. We give  conditions 
for equalities in the chain~\eqref{eq:deg} of inequalities in
Theorem~\ref{thm:vol=mindI}, and we will also show in Example~\ref{ex:Isa} that all the inequalities can
be strict.

Consider the open set $U =\{x \in (K^*)^d \, : \, \prod_{i=0}^n  f_i(x) \neq 0 \}$, the incidence variety 
\begin{equation}\label{eq:W}
W = \{(x,y) \in U \times (\K^*)^{} \, : \,  f_0(x) \, y_1 - f_1(x) = \dots = f_0(x) \, y_n - f_n(x) =0\},
\end{equation}
and the projection
\begin{equation}\label{eq:Wmap}
  \pi \colon W \times (\K^*)^{n} \rightarrow
  (\K^*)^n, \qquad \pi(x,y) = y.
\end{equation}
Then,  $S$ coincides with the closure of the image $\pi(W)$. Moreover, this map is finite iff 
$\ff$ is finite and both have the same degree.
Observe that  the polynomials defining $W$ don't have in general generic coefficients with respect to their  
supports even if $f_0, \dots, f_n$ have generic coefficients (so the results in~\cite{EK08} do not directly apply).   
In fact, we compute $\deg(S)$ in Theorem~\ref{thm:degS}
studying the following similar genericity question: given a subset $I$ of $[n]_0$, with cardinality $d$ and  generic coefficients
$(c_{ij}, i=1, \dots d, j=0, \dots n)$,  when do the polynomials
$\sum_{j=0}^n c_{ij} f_j, \, i=1,\dots, d,$
have generic coefficients with respect to their support? If not, which is 
the number of their common zeros in the torus $(\K^*)^d$?

In order to state our main result in this section, we need to introduce two definitions.

\begin{definition} \label{def:widehat}
 Given a subset $A \subset \ZZ^d$ and a natural number $j$,  denote by ${A}^j \subset \ZZ^{d+j}$ the finite subset
\begin{equation}\label{eq:Ahatj}
{A}^j \, = \, \{(\alpha,0) \in \ZZ^{d+j} \ | \ \alpha \in A\} \cup \{e_{d+1},\dots, e_{d+j}\},
\end{equation}
where $0 \in \ZZ^j$ and $e_i$ denotes the $i$-th canonical basis vector.
\end{definition}

We also define new lattice subsets associated to a collection
of finite subsets.
 
 \begin{definition} \label{def:AiJ}
Let $A_0, \dots, A_n \subset \ZZ^d$ with $d \le n$, be a collection of finite lattice subsets.
  Given $I =\{i_1, \dots, i_d\} \subset [n]_0$ with $|I|=d$, for any $j \in [d]$ we denote 
  \begin{equation}\label{eq:AijI}
   A_{i,I} \, = \, A_{i} \cup (\cup_{j \notin I} A_j) \subset \ZZ^d,
  \end{equation}
  and we define $d_I$ as the mixed volume
$d_I =  \MV(A_{i_1,I}, \dots, A_{i_d,I}).$
 \end{definition}
We then have
\begin{theorem}\label{thm:degS}
Consider Laurent polynomials $f_0, \dots, f_n$  in $d$ variables and coefficients in $\K$ with 
respective supports  $A_0, \dots, A_s \subset \ZZ^d$, such that
 the rational map ${\ff}\colon (\K^*)^d\dashrightarrow  (\K^*)^{n}$  
 defined as in~\eqref{eq:def3} by
${\mathbf f}=\left(\frac{f_1}{f_0}, \dots, \frac{f_n}{f_0}\right)$,
is generically finite with $\deg(\mathbf f)=\delta$.
Then,
 \begin{equation}\label{eq:deg}
  \deg(S) \cdot \delta\, \le \, \MV({A}_0^{n+1-d}, \dots, {A}_n^{n+1-d}) \le \, 
{\rm min}_{|I|=d} d_I \le \, {\rm max}_{|I|=d} d_I  \le \, {\rm vol}({\rm conv}(\cup_{j=0}^n
A_j)).
 \end{equation}
Moreover, the left equality holds when $f_0, \dots,f_n$ have generic coefficients.
\end{theorem}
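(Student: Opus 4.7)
The strategy is to realize $\delta\cdot\deg(S)$ as the cardinality of a zero-dimensional intersection in the torus $(\K^*)^d$, and then to bound this count successively by the terms in~\eqref{eq:deg} via Bernstein's theorem. For $d$ generic affine linear forms $L_1,\ldots,L_d$ on $\K^n$, the intersection $S\cap V(L_1,\ldots,L_d)$ consists of $\deg(S)$ points, each with exactly $\delta$ preimages under $\ff$ lying in the open set $\{x\in(\K^*)^d:f_0(x)\neq 0\}$. After clearing denominators, these preimages are precisely the common zeros in $(\K^*)^d$ of the $d$ generic linear combinations
\[ g_i \;:=\; \sum_{j=0}^{n}\ell_{i,j}\,f_j,\qquad i=1,\ldots,d, \]
so that $\delta\cdot\deg(S)=\#\{x\in(\K^*)^d:g_1(x)=\cdots=g_d(x)=0\}$.

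For the leftmost inequality, I would use a Cayley-type reformulation: introduce $n+1-d$ auxiliary variables $y_1,\ldots,y_{n+1-d}$ and generic constants $b_{j,k}\in\K$, and set
\[ F_j(x,y) \;:=\; f_j(x)+\sum_{k=1}^{n+1-d}b_{j,k}\,y_k,\qquad j=0,\ldots,n, \]
so that $F_j$ is supported exactly on $A_j^{n+1-d}$ in the sense of Definition~\ref{def:widehat}. The system $F_0=\cdots=F_n=0$ is linear in $y$, of the matrix form $B\cdot y=-(f_0(x),\ldots,f_n(x))^T$ with $B=(b_{j,k})$ of generic full column rank $n+1-d$. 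Its solvability amounts to $d$ compatibility conditions $g_i(x)=0$ (with coefficients coming from a basis of the left null space of $B$, hence generic), and once these hold, $y$ is uniquely determined and generically has every coordinate in $\K^*$. This yields a bijection (in the tori) between zeros of $F_0=\cdots=F_n=0$ in $(\K^*)^{n+1}$ and zeros of $g_1=\cdots=g_d=0$ in $(\K^*)^d$. Bernstein's theorem then gives
\[ \delta\cdot\deg(S) \;\le\; \MV(A_0^{n+1-d},\ldots,A_n^{n+1-d}), \]
with equality when the $F_j$ --- equivalently the $f_j$ --- have generic coefficients.

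For the second inequality, fix $I=\{i_1,\ldots,i_d\}\subset[n]_0$. Using the $n+1-d$ equations $\{F_j=0\}_{j\notin I}$, the invertibility (for generic $b$'s) of the corresponding submatrix of $B$ lets one solve for $y$ as a linear combination of $\{f_j(x)\}_{j\notin I}$; substituting into $F_i=0$ for $i\in I$ produces $d$ equations in $x\in(\K^*)^d$ of the form $f_i+\sum_{j\notin I}d_{i,j}f_j=0$, each with support $A_{i,I}$. The zeros of $F_0=\cdots=F_n=0$ in $(\K^*)^{n+1}$ inject via $(x,y)\mapsto x$ into the zeros of this reduced system in $(\K^*)^d$; taking the $F_j$'s generic and applying Bernstein on both ends yields $\MV(A_0^{n+1-d},\ldots,A_n^{n+1-d})\le d_I$, and hence $\le\min_I d_I$. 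The third inequality is trivial, and the fourth follows from monotonicity of the mixed volume applied to the inclusions $A_{i,I}\subseteq\bigcup_{j=0}^nA_j$, combined with $\MV(B,\ldots,B)=\vol(\conv(B))$.

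The technical heart of the argument is the bijection claimed in the Cayley reformulation, specifically the step ensuring that the coordinates $y_k$ produced by solving the linear system land in $\K^*$ at every common zero of $g_1=\cdots=g_d=0$ in $(\K^*)^d$. Since each $y_k$ becomes a linear combination of the values $\{f_j(x)\}$ with coefficients determined by $B$, this reduces to a generic non-vanishing statement, namely that the locus where some $y_k$ vanishes does not meet the (generic) zero set of $g_1=\cdots=g_d=0$. This holds on a Zariski-dense set of parameters $b_{j,k}$, and making the necessary open conditions explicit is the main technical point of the proof.
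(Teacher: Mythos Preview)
Your proposal is correct and follows essentially the same strategy as the paper: express $\delta\cdot\deg(S)$ as a count of torus zeros of $d$ generic linear combinations $g_i=\sum_j \ell_{i,j}f_j$, lift to an $(n{+}1)$-variable system with supports $A_j^{n+1-d}$, and invoke Bernstein's theorem. The differences are minor and organizational. First, the paper uses an \emph{asymmetric} lift: instead of your $F_j=f_j+\sum_k b_{j,k}y_k$ with all $b_{j,k}$ generic, it takes $h_j=f_j-x_j$ for $j\notin[d]$ (one new variable per equation) and only the $d$ equations $h_i=f_i+\sum \mu_{ij}x_j$ with generic coefficients. The advantage is that the torus condition $y\in(\K^*)^{n+1-d}$ becomes simply $f_j(x)\neq 0$ for $j\notin[d]$, which the paper then verifies directly by a dimension count; it afterward observes that a generic change of the $y$-variables recovers the symmetric mixed volume $\MV(A_0^{n+1-d},\ldots,A_n^{n+1-d})$. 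Your symmetric lift is cleaner to state but leaves the non-vanishing of the $y_k$ as the technical point you flag --- which is exactly the issue the paper's asymmetric choice is designed to make transparent. Second, for the inequality $\MV(A_0^{n+1-d},\ldots,A_n^{n+1-d})\le\min_I d_I$, the paper argues indirectly (the left side equals $\delta\cdot\deg(S)$ in the generic case, and the latter is $\le d_I$ by the row-reduction already performed), whereas you give a direct combinatorial argument by eliminating $y$ from the lifted system; both are valid and yours is arguably more self-contained.
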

\begin{proof}
As we mentioned, we can assume without loss of generality (possibly after 
a common translation of the supports)  that all exponents are nonnegative.
The degree of $S$ equals the number of intersection points of $S$ with the zero set of  
 $d$ generic linear forms in $n$ variables with coefficients in
$\K$, which we will write as follows:
\begin{equation} \label{eq:ell}
\ell_i=a_{i,1} y_1 + \cdots + a_{i,n}y_n+a_{i,0}, \quad i \in [d].
\end{equation} 
Since $\ell_1, \dots, \ell_{d}$
are generic, we can assume that every intersection point lies in $\ff(U)$, and it is
enough to consider all common zeros of the form $z = \ff(\sigma)$ for
some $\sigma$ in the domain of $\ff$ (that is, such that $f_i(\sigma) \neq 0$ for
any $i \in [n]_0$).  Such $\sigma$ belongs to the variety defined
by the following polynomials $g_1, \dots, g_d \in \K[x_1, \dots,x_d]$:
\begin{equation*}\label{eq:g}
g_i=\sum_{j=0}^na_{i,j}\, f_j, \quad  i \in [d].
\end{equation*}
These polynomials have support $\cup_{i=0}^n A_i$ and so, 
the number of isolated common zeros in the torus is bounded by 
${\rm vol}({\rm conv}(\cup_{j=0}^n
A_j))$ by Bernstein's theorem, with equality in case $g_1, \dots, g_d$
are generic polynomials with this support.
But, depending on the relative positions of $A_0, \dots, A_n$, this
need not be the case.

A first refinement of this bound is given by the following observation.
As the coefficients
$(a_{i,j})_{i \in [n]_0 \atop  j \in [d]}$ are generic, for
every set $I=\{i_1, \dots, i_{d}\}\subset[n]_0$ with $|I|=d$,
 the system $g_1 =
\dots g_{d} = 0$ is equivalent by means of row operations to a
system of the form
\begin{equation} \label{eq:h_{i,I}}\begin{matrix} h_{1}^I &=& f_{i_1}& & &
+\ \sum_{j \notin I} \mu_{1j}^I \, f_{j} &=& 0
\\ && &  \ddots & & \quad \vdots \\
h_{d}^I &=& & & f_{i_{d}}& +\ \sum_{j \notin I}
\mu_{dj}^I \, f_{j} &=& 0,
\end{matrix}
\end{equation}
where the coefficients $\mu_{ij}^I \in\K$ are generic. 
These polynomials $h_{j}^I$ have  supports $A_{i_j,I}$ for any $j \in [d]$.
By the BKK bound, the system \eqref{eq:h_{i,I}} has
at most $ d_I$ isolated
common zeros in $(\K^*)^{d}$ and by the monotony of the mixed volume,
$d_I \le \MV(\cup_{j=0}^n A_j, \dots, \cup_{j=0}^n A_j)) = 
{\rm vol}({\rm conv}(\cup_{j=0}^n
A_j))$.
But still, depending on the original
supports $A_0, \dots, A_n$, $h_{1}^I, \dots, h_{d}^I$ need not have generic coefficients
with those supports, even if $f_{i}$ are generic for their support $A_{i}$.

Take $I = [d]$ and consider the following polynomials 
in $\K[x_1, \dots,
x_{n+1}]$:
\begin{equation} \label{eq:Istar}\begin{matrix}
h^{[d]}_i &=& f_i+\ \sum_{j=d+1}^{n+1} \mu_{ij}^{[d]}  \, x_{j}, &  1 \le i \le d, \\
{h}_j^{[d]} & = &f_{j} - x_n, & d+1 \le j \le n, \\
{h}_{n+1}^{[d]} &= & f_0 - x_{n+1}. &{}
\end{matrix}
\end{equation}
Denote by $U$ the complement in $(\K^*)^d$ of {the union} of the zeros
of $f_0, \dots, f_{n+1}$, that is, the domain of definition of $\ff$.
Clearly, there is a bijection between common zeros in $U \times (\K^*)^{n+1-d}$ of $h_{1}^{[d]}, \dots,
h_{{n+1}}^{[d]}$  and common zeros in $U$ of $g_1,
\dots, g_{d}$, so $\deg(S)\deg(\ff)$ is bounded above by
 the mixed volume of their supports. But, for
generic $f_0, \dots, f_n$ and generic linear forms $\ell_1, \dots, \ell_d$, these new polynomials $h_{1}^{[d]}, \dots,
h_{n+1}^{[d]}$ have generic coefficients. Moreover, they
do not have common zeros in common in $(\K^*)^{n+1}$ with any $f_i$.
Indeed, no $f_i$ with $i=d+1,\dots,n$ can vanish if $(x_{d+1}, \dots, x_{n+1})
 \in (\K^*)^{n+1-d}$. Take any $i \in [d]$, for instance $i=1$.
Then, if $x=(x_1, \dots,x_{n+1})$ is a solution in the torus of system~\eqref{eq:Istar} and $f_1(x_1,\dots, x_d)=0$,  then $x$ is also a
solution of the system of $n+2$ generic polynomials in $n+1$ variables:
\[f_1 = \sum_{j=d+1}^{n+1} \mu_{1j}^{[d]} \,  x_{j} = {h}_2^{[d]} =
\dots = {h}_{n+1}^{[d]} =0,\]
which is a contradiction.
So the number of common
zeros in $U\times (\K^*)^{n+1-d}$ is the mixed volume of their
supports, $\MV_{n+1}({A}_1^{n+1-d},\dots, A_d^{n+1-d}, (A_{d+1},0) \cup \{e_{d+1}\}, 
\dots, (A_n, 0) \cup \{e_n\},  (A_0, 0) \cup \{e_{n+1}\})$,
where $0 \in \ZZ^{n+1-d}$ as in Definition~\ref{def:widehat}.
Note that replacing $(x_{d+1},\dots,x_{n+1})$ by a generic linear combination,
we would get the same number of solutions in the torus, and thus, this mixed volume coincides 
with the mixed volume $\MV({A}_0^{n+1-d}, \dots, {A}_n^{n+1-d})$, as stated in~\eqref{eq:deg},
and the first inequality follows.

To prove that  $\MV({A}_0^{n+1-d}, \dots, {A}_n^{n+1-d}) \le \,{\rm
  min}_{|I|=d} d_I $, observe that $\MV({A}_0^{n+1-d}, \dots,
{A}_n^{n+1-d})$ is the number of solutions of a generic system of Equations
\eqref{eq:Istar}, which coincides with $\deg(S)\,\deg(\ff)$. As
proved using system~\eqref{eq:h_{i,I}}, we have
$\deg(S)\,\deg(\ff)\leq d_{I}$ for all $I\subset [n]$ with 
$|I|=d$, we deduce the second inequality in \eqref{eq:deg}, which
concludes the proof.
\end{proof}

We deduce from the proof of Theorem~\ref{thm:degS} that the generic value  
$\MV({A}_0^{n+1-d}, \dots, {A}_n^{n+1-d})$  of $\deg(S) \cdot \deg(\ff)$ equals 
$\MV_{n+1}({A}_1^{n+1-d},\dots, A_d^{{n}+1-d}, (A_{d+1},0) \cup \{e_{d+1}\}, 
\dots, (A_n, 0) \cup \{e_n\},  (A_0, 0) \cup \{e_{n+1}\})$. Moreover, we could replace
the choice of indices $[d]$ by any subset $I$ of $[n]_0$ of
cardinal $d$.

\begin{example} 
Consider the application
$\mathbf{f}$ given by the generic polynomials from
Example~\ref{ex:generic}. 
Theorem~\ref{thm:degS} proves the known fact that
$\deg(S)=9$. 
Moreover, in this case the first inequality in~\eqref{eq:deg}
$$\deg(S) \cdot \deg(\ff)\, = \, \MV({A}_0^{2}, \dots, {A}_3^{2}) \le \, 
{\rm min}_{|I|=2} d_I$$ 
in the statement is actually an equality. In fact,
we can consider the steps in the proof of Theorem~\ref{thm:degS} to verify that $\deg(S) \cdot \deg(\ff)=d_{\{1,2\}}.$
We will see below that for general supports the equalities in~\eqref{eq:deg} can be strict.
\end{example}

\subsection{Sufficient conditions for equality}\label{ssec:suffcond}

With the notations from Theorem~\ref{thm:degS}, if
$A_0 = A_1 = \dots = A_n=A$ and $f_0,\dots, f_n$ are generic, the
inequalities in~\eqref{eq:deg} are all equalities and $\deg(S) \cdot \deg(\ff)$
equals $\rm{vol}(\conv(A))$. 
But in general, even for generic polynomials, the 
inequalities may be strict, as we show in the next example.

\begin{example}\label{ex:Isa}
Let $d=2, n+1={4}$, so $S$ is a hypersurface in dimension $3$.
Consider the generic polynomials in $\K[x_1,x_2]$:
$$f_0(x_1,x_2) = x_1x_2,\ \
  f_1(x_1,x_2) = a_1x_2 + b_1x_2^2 + c_1x_1x_2,$$
$$f_2(x_1,x_2) = a_2x_1 + b_2x_1^2 + c_2x_1x_2 \ \mbox{ and } \
  f_3(x_1,x_2) = a_3x_1x_2 + b_3 x_1^3x_2 + c_3x_1x_2^3.$$
Let $A_0,A_1,A_2,A_3$ be their supports sets. We draw their
respective convex hulls $P_i$,  $i \in [3]_0$.
\begin{center}
\begin{tikzpicture}
\draw (0,3) -- (0,0) -- (3,0); \draw (0,2) node {$-$} (0,1) node
{$-$} (1,0) node {$|$} (2,0) node {$|$} ; \draw (-0.2,1) node
{\small{1}} (-0.2,2) node {\small{2}} ; \draw (1,-0.4) node
{\small{1}} (2,-0.4) node {\small{2}} ; \draw[fill, red] (1,1)
circle (2pt); \draw (1.3,1.3) node {$P_0$};
\end{tikzpicture}
\begin{tikzpicture}
\draw (0,3) -- (0,0) -- (3,0); \draw (0,2) node {$-$} (0,1) node
{$-$} (1,0) node {$|$} (2,0) node {$|$} ; \draw (-0.2,1) node
{\small{1}} (-0.2,2) node {\small{2}} ; \draw (1,-0.4) node
{\small{1}} (2,-0.4) node {\small{2}} ; \draw[fill, green] (0,1)
-- (1,1) -- (0,2) -- (0,1); \draw[fill] (0,1) circle (2pt) (1,1)
circle (2pt) (0,2) circle (2pt); \draw (1.3,1.3) node {$P_1$};
\end{tikzpicture}
\begin{tikzpicture}
\draw (0,3) -- (0,0) -- (3,0); \draw (0,2) node {$-$} (0,1) node
{$-$} (1,0) node {$|$} (2,0) node {$|$} ; \draw (-0.2,1) node
{\small{1}} (-0.2,2) node {\small{2}} ; \draw (1,-0.4) node
{\small{1}} (2,-0.4) node {\small{2}} ; \draw[fill, blue] (1,0) --
(1,1) -- (2,0) -- (1,0); \draw[fill] (1,0) circle (2pt) (1,1)
circle (2pt) (2,0) circle (2pt); \draw (1.3,1.3) node {$P_2$};
\end{tikzpicture}

\begin{tikzpicture}
\draw (0,4) -- (0,0) -- (4,0); \draw (0,3) node {$-$} (0,2) node
{$-$} (0,1) node {$-$} (1,0) node {$|$} (2,0) node {$|$} (3,0)
node {$|$}; \draw (-0.2,1) node {\small{1}} (-0.2,2) node
{\small{2}} (-0.2,3) node {\small{3}}; \draw (1,-0.4) node
{\small{1}} (2,-0.4) node {\small{2}} (3,-0.4) node {\small{3}};
\draw[fill, magenta] (3,1) -- (1,1) -- (1,3) -- (3,1); \draw[fill]
(3,1) circle (2pt) (1,1) circle (2pt) (1,3) circle (2pt); \draw
(2.3,2.3) node {$P_3$};
\end{tikzpicture}
\begin{tikzpicture}
\draw (0,4) -- (0,0) -- (4,0); \draw (0,3) node {$-$} (0,2) node
{$-$} (0,1) node {$-$} (1,0) node {$|$} (2,0) node {$|$} (3,0)
node {$|$}; \draw (-0.2,1) node {\small{1}} (-0.2,2) node
{\small{2}} (-0.2,3) node {\small{3}}; \draw (1,-0.4) node
{\small{1}} (2,-0.4) node {\small{2}} (3,-0.4) node {\small{3}};
\draw[fill, cyan] (0,1) -- (1,0) -- (2,0) -- (3,1) -- (1,3) --
(0,2) -- (0,1); \draw[fill] (0,1) circle (2pt) (1,0) circle (2pt)
(2,0) circle (2pt) (3,1) circle (2pt) (1,3) circle (2pt) (0,2)
circle (2pt); \draw (3,3) node {$P=\conv(\cup_{i=0}^3A_i)$};
\end{tikzpicture}
\end{center}

The closure $S \subset \K^3$ of the image of the associated rational map
$\ff=\left(\cfrac{f_1}{f_0}, \cfrac{f_2}{f_0},
\cfrac{f_3}{f_0}\right)$ is a hypersurface.
Let $H$ be a reduced polynomial defining $S$.
We observe that the degree of the
map $\ff$ is $1$, and we can compute
$\deg(S) = \deg(H) = \MV({A}^2_0,
\dots, A_3^2) = 5$,
$\vol(\conv(\cup_{j=0}^3A_j))=11$ and
$$ 5 < 6 = d_{\{1,3\}} = d_{\{2,3\}}
   < d_{\{0,3\}} < d_{\{1,2\}} < d_{\{0,1\}} = d_{\{0,2\}}
   = 10 < 11.$$
In fact, there exists a unique monomial in $H$ of maximal degree $5$.
\end{example} 

In what follows we will present conditions on the supports $A_0,\dots,A_n$  
for which some of the inequalities in the statement of Theorem~\ref{thm:degS}
 are actually equalities in case $d=n-1$.
  We will need some preliminary results, which include equivalences due
to Bihan and Soprunov (see \cite{BS17}), and
 the notion of an essential collection of subsets in Definition~\ref{def:adapted},
which was originally introduced in~\cite{Sturmfels94}.
We then have:

\begin{lemma} \label{lem:cond Alicia/Bihan} Let $B_1, \dots, B_{n-1}$ and $B_1', \dots, B_{n-1}'$ be
finite sets in $(\ZZ_{\ge 0})^{n-1}$ such that $B_i \subset B_i'$
for every $i \in [n-1]$. Then, the following statements are equivalent:
\begin{enumerate} [label={(\roman*)},itemindent=1em]
\item \label{lemitm:cond Alicia/Bihana} $\MV(B_1, \dots, B_{n-1})<\MV(B_1',
\dots, B_{n-1}')$.
\item \label{lemitm:cond Alicia/Bihanb} There exists a coherent collection of proper
faces $F=(F_1, \dots, F_{n-1})$ of the collection $B_1', \dots, B_{n-1}'$ such
that the collection $B_{1,F}', \dots, B_{{n-1},F}'$ is essential,
where $$B_{i,F}'=
\begin{cases} F_i \ \mbox{ if } B_i\cap F_i \ne \emptyset \\
B_i' \mbox{ if } B_i\cap F_i = \emptyset \end{cases}.$$
\item \label{lemitm:cond Alicia/Bihanc} There exists a coherent collection of proper faces
$F=(F_1, \dots, F_{n-1})$ of the collection $B_1', \dots, B_{n-1}'$ such that
the collection $\{B_i\cap F_i \ | \ B_i\cap F_i \ne \emptyset\}$
is either empty or essential.
\end{enumerate}
\end{lemma}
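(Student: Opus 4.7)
The plan is to derive this lemma as a consequence of the slicing formula for mixed volumes together with Bihan--Soprunov's characterization~\cite{BS17} of strict monotonicity of the mixed volume under inclusions of supports. I would organize the proof as the cycle $(ii) \Rightarrow (i) \Rightarrow (iii) \Rightarrow (ii)$, using the analytic ingredient for the first two implications and a purely combinatorial reduction for the last. A preliminary observation that I would use tacitly is that whenever any of the three conditions holds non-trivially, the full collection $(B'_1, \ldots, B'_{n-1})$ must be essential, since otherwise $\MV(B'_1, \ldots, B'_{n-1}) = 0$ and none of (i)--(iii) could hold in a non-vacuous way.

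For $(ii) \Rightarrow (i)$ and $(i) \Rightarrow (iii)$, I would pick a common inner normal $\alpha$ defining the coherent collection $F$ of proper faces and apply the standard slicing formula, which writes $\MV(B'_1, \ldots, B'_{n-1}) - \MV(B_1, \ldots, B_{n-1})$ as a nonnegative sum of boundary contributions indexed by coherent collections of proper faces of $(B'_1, \ldots, B'_{n-1})$. Each boundary term is, up to a positive factor, a lower-dimensional mixed volume of a transverse subsystem which, by Bernstein's theorem, is positive precisely when its underlying support collection is essential. In direction $\alpha$, this condition is exactly the one appearing in (iii) for ``$(i) \Rightarrow (iii)$'' (arguing by contrapositive) and in (ii) for ``$(ii) \Rightarrow (i)$''.

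For $(iii) \Rightarrow (ii)$, the purely combinatorial step, I would take the same $F$ and verify directly that $(B'_{1,F}, \ldots, B'_{n-1,F})$ is essential. Given $J \subseteq [n-1]$, split $J = J_0 \sqcup J_1$ according to whether $B_j \cap F_j$ is non-empty. The essentiality of the intersection subcollection from (iii), applied to $K = J_0$, together with $B_j \cap F_j \subseteq F_j$, supplies $\dim(\sum_{j \in J_0} \conv(F_j)) \geq |J_0|$. For the indices in $J_1$, the essentiality of $(B'_1, \ldots, B'_{n-1})$ noted above controls $\dim(\sum_{j \in J_1} \conv(B'_j))$. A projection argument modulo the hyperplane $\alpha^\perp$ containing the faces $F_j$ then combines the two estimates: since each $F_j$ is a proper face, the linear span of $B'_j - B'_j$ is necessarily not contained in $\alpha^\perp$, giving an extra transverse dimension for every index in $J_1$.

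The main obstacle is this last projection step: the Minkowski sum of ``boundary'' pieces $\conv(F_j)$ and ``interior'' pieces $\conv(B'_j)$ does not decompose additively in dimension, so one must argue carefully that the transverse direction provided by the proper-face condition is enough to recover the lost dimensions one index at a time. I would handle this by inducting on $|J_1|$, peeling off one index at a time and using at each step that the corresponding $B'_j$ has a direction not lying in the current span of $\sum_{j \in J_0} \conv(F_j)$ together with the residual sum. A related subtle point is that the hypothesis ``$F$ consists of proper faces'' is essential exactly at this place, since it is what prevents $B'_j$ from being entirely parallel to $\alpha^\perp$ and thereby ensures the transverse contribution.
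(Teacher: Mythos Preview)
Your strategy for $(iii) \Rightarrow (ii)$ contains a genuine gap: you propose to take the \emph{same} coherent collection $F$ witnessing (iii) and verify directly that $(B'_{1,F}, \ldots, B'_{n-1,F})$ is essential. This is false in general, and the paper's Example~\ref{ex:different} is an explicit counterexample. There $n-1 = 3$, and for $\alpha = (0,1,1)$ the collection $\{B_i \cap F_i : B_i \cap F_i \neq \emptyset\}$ reduces to the single one-dimensional set $B_1 \cap F_1$, hence is essential, so (iii) holds for this $F$; yet $B'_{2,F} = B'_2$ and $B'_{3,F} = B'_3$ both lie in the plane $\{z = 0\}$, as does $F_1$, so $\dim(\conv(F_1) + \conv(B'_2) + \conv(B'_3)) = 2 < 3$ and (ii) fails for this same $F$. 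Conversely, $\alpha = (1,1,0)$ witnesses (ii) but not (iii). The paper explicitly remarks after that example that a direct combinatorial proof of $(ii) \Leftrightarrow (iii)$ avoiding (i) would be interesting but is not given.

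Your induction on $|J_1|$ breaks down precisely here: the proper-face hypothesis furnishes only a \emph{single} direction transverse to $\alpha^\perp$, so once the first index of $J_1$ has been absorbed the remaining $B'_j$'s need not contribute any further dimensions. In the example above, $B'_3 = B'_2$ lies entirely in the span already generated by $F_1 + B'_2$, so the second inductive step fails. The essentiality of the full collection $(B'_1,\dots,B'_{n-1})$ does not rescue this, since that bound pertains to the sum $\sum_{j\in J_1}\conv(B'_j)$ alone and says nothing about how it sits relative to $\sum_{j\in J_0}\conv(F_j)$.

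The paper's proof avoids this difficulty by routing both equivalences through (i). It quotes \cite[Theorem~3.3]{BS17} verbatim for $(i) \Leftrightarrow (ii)$, and for $(i) \Leftrightarrow (iii)$ it applies Bernstein's non-degeneracy criterion directly: taking generic polynomials $g_i$ supported on $B_i$ but viewed as having supports $B'_i$, strict inequality of mixed volumes is equivalent to the existence of a coherent proper-face collection $F$ of $(B'_1, \ldots, B'_{n-1})$ for which the face system has a torus solution; by genericity of the coefficients on $B_i$, the face system has supports $B_i\cap F_i$, and a generic sparse system has a torus solution exactly when the nonempty supports form an essential family (or there are none), which is (iii).
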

\begin{proof}
The equivalence between items \ref{lemitm:cond Alicia/Bihana} and
\ref{lemitm:cond Alicia/Bihanb} follows from \cite[Theorem
3.3]{BS17}. To see that item \ref{lemitm:cond Alicia/Bihana} is
equivalent to item \ref{lemitm:cond Alicia/Bihanc} we apply
Bernstein's Theorem. Let $g_1, \dots, g_{n-1}$ be generic
polynomials with support sets $B_1, \dots, B_{n-1}$. Write
$g_i(x)=\sum_{\alpha \in B_i}c_{i\alpha}x^\alpha$. Then $\MV(B_1,
\dots, B_{n-1})<\MV(B_1', \dots, B_{n-1}')$ if and only if there
exists a coherent collection of proper faces $F=(F_1, \dots,
F_{n-1})$ of $(B_1', \dots, B_{n-1}')$ such that the polynomials
$\{\sum_{\alpha \in B_i \cap
F_i}c_{i\alpha}x^\alpha\}_{i=1}^{n-1}$ have a common zero with all
non-zero coordinates. But, since $g_1, \dots, g_{n-1}$ are
generic, this is equivalent to the set $\{B_i\cap F_i \ | \
B_i\cap F_i \ne \emptyset\}$ being empty or essential.
\end{proof}

\begin{corollary} \label{cor:Bihan} Let $B_1,\dots,B_{n-1}$ be finite sets in
$(\ZZ_{\ge 0})^{n-1}$. Then, the following are equivalent:
\begin{enumerate}[label={(\roman*)},itemindent=1em]
\item \label{coritm:Bihana} $\MV(B_1, \dots, B_{n-1})=\vol(\conv(\cup_{i=1}^{n-1}B_i))$.
\item \label{coritm:Bihanb} For every $r \in [n-1]$ and every face $\mathcal{F}$ of
$\cup_{i=1}^{n-1}B_i$ of codimension $r$,   the cardinality of the set $\{i \ | \ B_i \cap
\mathcal{F} \neq \emptyset\}$ is at least  $n-r$.
\item \label{coritm:Bihanc} For every proper face $\mathcal F$ of $\cup_{i=1}^{n-1}B_i$ there exists
a nonempty subset $J \subset \{i \ | \ B_i \cap \mathcal{F} \neq
\emptyset\}$ such that  $\dim(\sum_{j \in J} (B_j \cap {\mathcal
F})) < |J|$.
\end{enumerate}
\end{corollary}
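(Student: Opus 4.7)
The plan is to derive all three equivalences in a single application of Lemma~\ref{lem:cond Alicia/Bihan}, taken with the constant enlarged family $B_1' = \cdots = B_{n-1}' := \bigcup_{j=1}^{n-1} B_j$. Since each $B_i \subseteq B_i'$ and $\conv(B_i') = P := \conv(\bigcup_{j} B_j)$ is full-dimensional, one has $\MV(B_1', \ldots, B_{n-1}') = \vol(P)$, so the equality in \ref{coritm:Bihana} is exactly the negation of Lemma~\ref{lem:cond Alicia/Bihan}\ref{lemitm:cond Alicia/Bihana}. Because all the $B_i'$ coincide, a coherent collection of proper faces $F = (F_1, \ldots, F_{n-1})$ is prescribed by a single $\alpha$ and collapses to a common proper face $\mathcal F := \face_\alpha(P)$, with $F_i = \mathcal F$ for every $i$. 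Write $r$ for the codimension of $\mathcal F$ in $P$ and $I_\mathcal F := \{i : B_i \cap \mathcal F \neq \emptyset\}$. The proof then consists in translating parts \ref{lemitm:cond Alicia/Bihanb} and \ref{lemitm:cond Alicia/Bihanc} of the lemma into the face-by-face conditions \ref{coritm:Bihanb} and \ref{coritm:Bihanc} of the corollary.

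For \ref{coritm:Bihana} $\Leftrightarrow$ \ref{coritm:Bihanb}, I would unpack Lemma~\ref{lem:cond Alicia/Bihan}\ref{lemitm:cond Alicia/Bihanb}. With our choice of $B_i'$, the modified sets are $B_{i,F}' = \mathcal F$ when $i \in I_\mathcal F$ and $B_{i,F}' = B_i'$ otherwise, so $\conv(B_{i,F}') = P$ for $i \notin I_\mathcal F$. A Minkowski summand equal to the full-dimensional $P$ forces the affine hull of the sum to be all of $\RR^{n-1}$, so the essentialness inequality $\dim\sum_{j \in J}\conv(B_{j,F}') \geq |J|$ is automatic whenever $J \not\subseteq I_\mathcal F$. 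For $J \subseteq I_\mathcal F$, the sum equals $|J|\,\mathcal F$ and has dimension $\dim \mathcal F = n-1-r$, so essentialness reduces to the single inequality $|I_\mathcal F| \leq n-1-r$, i.e.\ $|I_\mathcal F| < n-r$. Thus Lemma~\ref{lem:cond Alicia/Bihan}\ref{lemitm:cond Alicia/Bihanb} asserts the existence of some proper face at which \ref{coritm:Bihanb} fails, giving the desired equivalence.

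For \ref{coritm:Bihana} $\Leftrightarrow$ \ref{coritm:Bihanc}, the collection $\{B_i \cap \mathcal F : B_i \cap \mathcal F \neq \emptyset\}$ is always nonempty, since every vertex of $\mathcal F$ is a vertex of $P$ and hence lies in some $B_j$. So Lemma~\ref{lem:cond Alicia/Bihan}\ref{lemitm:cond Alicia/Bihanc} is exactly the statement that this collection is essential at some proper face $\mathcal F$, which is the negation of \ref{coritm:Bihanc}. The only subtle step in the whole argument is the essentialness computation in the translation of \ref{lemitm:cond Alicia/Bihanb}: one must verify carefully that a single full-dimensional Minkowski summand $P$ already forces the sum to attain maximum dimension $n-1$, so that the essentialness condition collapses to the purely combinatorial count of $|I_\mathcal F|$ against $\dim \mathcal F$. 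Once this is established, combining the three equivalences of Lemma~\ref{lem:cond Alicia/Bihan} immediately yields \ref{coritm:Bihana} $\Leftrightarrow$ \ref{coritm:Bihanb} $\Leftrightarrow$ \ref{coritm:Bihanc}.
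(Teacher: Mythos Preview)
Your argument is correct and follows the same route as the paper, reducing everything to Lemma~\ref{lem:cond Alicia/Bihan} with the choice $B_i' = \bigcup_j B_j$. The only difference is that for \ref{coritm:Bihana}$\Leftrightarrow$\ref{coritm:Bihanb} the paper appeals directly to \cite[Corollary~3.7]{BS17}, whereas you instead unpack the essentiality condition in Lemma~\ref{lem:cond Alicia/Bihan}\ref{lemitm:cond Alicia/Bihanb}; since that part of the lemma is itself \cite[Theorem~3.3]{BS17}, this is the same content, just made self-contained within the paper (and your implicit full-dimensionality assumption on $P$ is likewise implicit in the paper's citation).
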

\begin{proof} The equivalence between items \ref{coritm:Bihana} and \ref{coritm:Bihanb}
follows from applying \cite[Corollary 3.7]{BS17} using the
polytope $\conv(\cup_{i=1}^{n-1}B_i)$. Item \ref{coritm:Bihanc}
follows from negation of item \ref{lemitm:cond Alicia/Bihanc} in
Lemma~\ref{lem:cond Alicia/Bihan}.
\end{proof}

\begin{example2} 
Recall that by Definition~\ref{def:AiJ}, for any any $I=\{j,k\}\subset[3]$ with 2 elements and $i$ the index in its
complement,
$d_I=\MV(A_j \cup A_0 \cup A_i, A_k \cup A_0 \cup A_i)$ and both supports are contained in the
union $\cup_{j=0}^3 A_i$.  It  can be checked that item~\ref{coritm:Bihanc}
in Corollary~\ref{cor:Bihan} holds and so
$d_I < \vol(\conv(\cup_{j=0}^3A_j))$ for all $I$.
\end{example2}

The following example shows that the conditions given
in items~\ref{lemitm:cond
Alicia/Bihanb} and~\ref{lemitm:cond Alicia/Bihanc} in
Lemma~\ref{lem:cond Alicia/Bihan} are different.

\begin{example} \label{ex:different}
Consider the finite sets in $(\ZZ_{\ge 0})^3$
$$\begin{array}{lcl}
B_1  = \{ (0,0,0), (1,0,0)\},          &              &
B_1' = \{ (0,0,0), (2,0,0), (0,2,0), (0,0,2)\}, \\
B_2  = \{ (0,1,0), (1,1,0), (0,2,0)\}, &              &
B_2' = \{ (0,0,0), (2,0,0), (0,2,0)\}, \\
B_3  = \{ (0,1,0), (1,1,0), (0,2,0)\}  & \mbox{ and } &
B_3' = \{(0,0,0), (2,0,0), (0,2,0)\}.
\end{array}$$
\begin{center}
\tdplotsetmaincoords{70}{110}
\begin{tikzpicture}[tdplot_main_coords,scale=0.4]
\draw[thick,->] (0,0,0) -- (3,0,0) node[anchor=north]{$x$};
\draw[thick,->] (0,0,0) -- (0,3,0) node[anchor=north]{$y$};
\draw[thick,->] (0,0,0) -- (0,0,3) node[anchor=south]{$z$};
\draw[thick,cyan] (0,0,0) -- (1,0,0); \draw[fill] (0,0,0) circle
(2pt) (1,0,0) circle (2pt); \draw (0,1,1) node {$B_1$};
\end{tikzpicture}
\tdplotsetmaincoords{70}{110}
\begin{tikzpicture}[tdplot_main_coords,scale=0.4]
\draw[thick,->] (0,0,0) -- (3,0,0) node[anchor=north]{$x$};
\draw[thick,->] (0,0,0) -- (0,3,0) node[anchor=north]{$y$};
\draw[thick,->] (0,0,0) -- (0,0,3) node[anchor=south]{$z$};
\draw[fill, violet] (0,1,0) -- (1,1,0) -- (0,2,0) -- (0,1,0);
\draw[fill] (0,1,0) circle (2pt) (1,1,0) circle (2pt) (0,2,0)
circle (2pt) (0,1,0) circle (2pt); \draw (0,1,1) node {$B_2$};
\end{tikzpicture}
\tdplotsetmaincoords{70}{110}
\begin{tikzpicture}[tdplot_main_coords,scale=0.4]
\draw[thick,->] (0,0,0) -- (3,0,0) node[anchor=north]{$x$};
\draw[thick,->] (0,0,0) -- (0,3,0) node[anchor=north]{$y$};
\draw[thick,->] (0,0,0) -- (0,0,3) node[anchor=south]{$z$};
\draw[fill, orange] (0,1,0) -- (1,1,0) -- (0,2,0) -- (0,1,0);
\draw[fill] (0,1,0) circle (2pt) (1,1,0) circle (2pt) (0,2,0)
circle (2pt) (0,1,0) circle (2pt); \draw (0,1,1) node {$B_3$};
\end{tikzpicture}
\tdplotsetmaincoords{70}{110}
\begin{tikzpicture}[tdplot_main_coords,scale=0.4]
\draw[thick,->] (0,0,0) -- (3,0,0) node[anchor=north]{$x$};
\draw[thick,->] (0,0,0) -- (0,3,0) node[anchor=north]{$y$};
\draw[thick,->] (0,0,0) -- (0,0,3) node[anchor=south]{$z$};
\draw[fill, cyan] (0,0,0) -- (2,0,0) -- (0,2,0) -- (0,0,0);
\draw[fill, cyan, opacity=0.5] (0,0,2) -- (2,0,0) -- (0,2,0) --
(0,0,2); \draw[fill] (0,0,0) circle (2pt) (2,0,0) circle (2pt)
(0,2,0) circle (2pt) (0,0,2) circle (2pt); \draw (0,1.5,1.5) node
{$B_1'$};
\end{tikzpicture}
\tdplotsetmaincoords{70}{110}
\begin{tikzpicture}[tdplot_main_coords,scale=0.4]
\draw[thick,->] (0,0,0) -- (3,0,0) node[anchor=north]{$x$};
\draw[thick,->] (0,0,0) -- (0,3,0) node[anchor=north]{$y$};
\draw[thick,->] (0,0,0) -- (0,0,3) node[anchor=south]{$z$};
\draw[fill, violet] (0,0,0) -- (2,0,0) -- (0,2,0) -- (0,0,0);
\draw[fill]  (0,0,0) circle (2pt) (2,0,0) circle (2pt) (0,2,0)
circle (2pt) (0,0,0) circle (2pt); \draw (0,1.5,1.5) node
{$B_2'$};
\end{tikzpicture}
\tdplotsetmaincoords{70}{110}
\begin{tikzpicture}[tdplot_main_coords,scale=0.4]
\draw[thick,->] (0,0,0) -- (3,0,0) node[anchor=north]{$x$};
\draw[thick,->] (0,0,0) -- (0,3,0) node[anchor=north]{$y$};
\draw[thick,->] (0,0,0) -- (0,0,3) node[anchor=south]{$z$};
\draw[fill, orange] (0,0,0) -- (2,0,0) -- (0,2,0) -- (0,0,0);
\draw[fill]  (0,0,0) circle (2pt) (2,0,0) circle (2pt) (0,2,0)
circle (2pt) (0,0,0) circle (2pt); \draw (0,1.5,1.5) node
{$B_3'$};
\end{tikzpicture}
\end{center}

Clearly $0 = \MV(B_1, B_2, B_3) < \MV(B_1', B_2', B_3')$. If we
take the coherent collection of faces of $B_1', B_2', B_3'$
associated to the interior normal vector $(1,1,0)$, the collection
from item \ref{lemitm:cond Alicia/Bihanb} in Lemma~\ref{lem:cond Alicia/Bihan} is essential. However, the
collection from item \ref{lemitm:cond Alicia/Bihanc} is not. On
the other hand, if we consider the coherent collection of faces
associated to the interior normal vector $(0,1,1)$, the collection
from item \ref{lemitm:cond Alicia/Bihanc} Lemma~\ref{lem:cond Alicia/Bihan} is essential but the
collection from item \ref{lemitm:cond Alicia/Bihanb} is not.

Therefore,  it is not straightforward to prove in
a direct way the equivalence of items~\ref{lemitm:cond
Alicia/Bihanb} and~\ref{lemitm:cond Alicia/Bihanc} in
Lemma~\ref{lem:cond Alicia/Bihan}. It would be interesting to see
a direct combinatorial proof without going through item~\ref{lemitm:cond Alicia/Bihana}.
\end{example}

The following theorem provides a necessary and
sufficient condition for the equality $d_I =
\vol(\conv(\cup_{j=0}^n A_j))$ for some $ I \subset [n]_0$ 
with $|I| = n-1 $. We will denote by $I^c$ the complement of $I$ in $[n]_0$.

\begin{theorem}\label{thm:vol=mindI} Let $A_0,\dots, A_n$ be finite sets
in $(\ZZ_{\ge 0})^{n-1}$. Fix $I\subset [n]_0$ with $|I|=n-1$.
The following statements are equivalent:
\begin{enumerate}[label={(\roman*)},itemindent=1em]
\item \label{thmitm:vol=mindIa} For every $r \in [n-1]$ and
every face $\mathcal{F}$ of $\cup_{j=0}^n A_j$ of codimension $r$
and every $J \subset [n]_0$ such that $|J|=r+2$ that contains 
$I^c$, we have $(\cup_{j\in J}A_j)\cap \mathcal{F} \ne \emptyset$.
\item \label{thmitm:vol=mindIb} $\vol(\conv(\cup_{j=0}^n A_j))= d_I$.
\end{enumerate}
 \end{theorem}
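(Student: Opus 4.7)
The plan is to reduce the statement directly to Corollary~\ref{cor:Bihan} applied to a suitable reindexing. Write $I = \{i_1,\dots,i_{n-1}\}$ and $I^c = \{k_1,k_2\}$. For $j \in [n-1]$ set $B_j := A_{i_j,I} = A_{i_j} \cup A_{k_1} \cup A_{k_2}$, so by Definition~\ref{def:AiJ} we have $d_I = \MV(B_1,\dots,B_{n-1})$. Observe that
\[
\bigcup_{j=1}^{n-1} B_j \; = \; \bigcup_{j=0}^{n} A_j,
\]
because each $B_j$ contains $A_{k_1}\cup A_{k_2}$, and as $j$ ranges over $[n-1]$ the sets $A_{i_j}$ exhaust the indices in $I$. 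Therefore $\vol(\conv(\cup_{j=0}^n A_j)) = \vol(\conv(\cup_{j=1}^{n-1} B_j))$, and Corollary~\ref{cor:Bihan} applied to the family $B_1,\dots,B_{n-1}$ converts statement~\ref{thmitm:vol=mindIb} into the following: for every $r\in[n-1]$ and every face $\mathcal{F}$ of $\cup_{j=0}^n A_j$ of codimension $r$,
\[
\#\{\,j\in[n-1] : B_j \cap \mathcal{F} \ne \emptyset\,\} \; \ge \; n-r. \tag{$\ast$}
\]

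Next I would translate $(\ast)$ in terms of the original supports. Let $T := \{\,k\in[n]_0 : A_k\cap\mathcal{F}\ne\emptyset\,\}$. By construction, $B_j \cap \mathcal{F}\ne\emptyset$ iff either $A_{i_j}\cap\mathcal{F}\ne\emptyset$ or $T\cap I^c\ne\emptyset$. Hence the left-hand side of $(\ast)$ equals $n-1$ if $T\cap I^c\ne\emptyset$, and equals $|T\cap I|=|T|$ otherwise. Since $r\ge 1$, the first case always satisfies $(\ast)$, so $(\ast)$ is equivalent to the implication
\[
T\cap I^c=\emptyset \;\Longrightarrow\; |T|\ge n-r.
\]
Negating: $(\ast)$ fails at some $\mathcal{F}$ of codimension $r$ iff there exists such $\mathcal{F}$ with $T\cap I^c=\emptyset$ and $|T|\le n-1-r$.

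The final step is to show that this negation matches the negation of~\ref{thmitm:vol=mindIa}. Statement~\ref{thmitm:vol=mindIa} fails at $\mathcal{F}$ (of codimension $r$) precisely when there is some $J\subset [n]_0$ with $|J|=r+2$, $I^c\subset J$, and $T\cap J=\emptyset$; this last condition forces $T\cap I^c=\emptyset$ and $J\setminus I^c\subset I\setminus T$, i.e.\ $|I\setminus T|\ge r$. Since $T\cap I^c=\emptyset$ gives $|T|=|T\cap I|\le (n-1)-r$, this is exactly the same negation as above, and conversely any $T$ with $T\cap I^c=\emptyset$ and $|T|\le n-1-r$ admits such a $J$ by choosing $r$ elements of $I\setminus T$ together with $I^c$. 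Thus~\ref{thmitm:vol=mindIa} and~\ref{thmitm:vol=mindIb} are equivalent.

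The only real subtlety I anticipate is the bookkeeping in the last paragraph — making sure the quantifiers on $r$ and $\mathcal{F}$ line up on both sides, and that the case $T\cap I^c \ne \emptyset$ is correctly dismissed because it gives $n-1 \ge n-r$ for every $r\in[n-1]$. The rest is a direct application of Corollary~\ref{cor:Bihan} and a short combinatorial unraveling.
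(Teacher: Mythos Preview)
Your proof is correct and follows essentially the same route as the paper: both arguments introduce the family $B_j = A_{i_j} \cup (\cup_{k\in I^c} A_k)$, note that $\cup_j B_j = \cup_j A_j$, and reduce the equivalence to Corollary~\ref{cor:Bihan}\ref{coritm:Bihanb} applied to $(B_1,\dots,B_{n-1})$. The paper's version is terser --- it rewrites condition~\ref{thmitm:vol=mindIa} via $\widetilde J = J\setminus I^c$ and immediately identifies the resulting statement with item~\ref{coritm:Bihanb} of the corollary --- whereas you carry out the pigeonhole translation between ``$|\{j:B_j\cap\mathcal F\neq\emptyset\}|\ge n-r$'' and ``every size-$r$ subfamily meets $\mathcal F$'' explicitly via the negation argument; but the underlying content is identical.
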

 \begin{proof} For every set $J\subset [n]_0$ that
 contains $I^c$, we can define $\widetilde{J}= J \backslash I^c \subset
 I$.  Then item \ref{thmitm:vol=mindIa} happens if and only if for every face $\mathcal{F}$
 of $\cup_{j=0}^n A_j$ of codimension $r>0$ and every $\widetilde{J} \subset
 I$ with $|\widetilde{J}|=r$, $(\cup_{j\in \widetilde{J}}(A_j\cup (\cup_{i \in I^c}A_i))\cap \mathcal{F}
\ne \emptyset$. Using item \ref{coritm:Bihanb} of
Corollary~\ref{cor:Bihan}, this is equivalent to item
\ref{thmitm:vol=mindIb}.
 \end{proof}

The next theorem provides a sufficient condition for
the equality $\deg(S) \cdot \deg(\ff)=d_{I}$ for some $ I \subset
[n]_0$ with $|I| = n-1$. 

\begin{theorem} \label{thm:degH=mindI} Let $A_0,\dots, A_n$ be finite
sets in $(\ZZ_{\ge 0})^{n-1}$. Let $f_0,\dots, f_n$ be generic
polynomials with support $A_0, \dots, A_n$ and coefficients in $\K$ such that the rational map 
$\ff$ from~\eqref{eq:def3} is generically finite.
Assume $I \subset [n]_0$ with $|I|=n-1$ satisfies that for every
coherent collection of proper faces $F$ of $(A_i\cup (\cup_{j \in
I^c}A_j))_{i \in I}$, there exists $J$ nonempty subset of
$\mathcal{P}_F = \{i \in I \ | \ A_i\cap F_i \ne \emptyset\}$ such
that $$ \mbox{dim}(\sum_{i\in J}A_i\cap F_i)<|J|.$$ Then,
deg$(S) \cdot \deg(\ff) = d_{I} =
 \MV(A_{i_1}, \dots, A_{i_{n-1}})$. 
\end{theorem}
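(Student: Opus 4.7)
The plan is to split the proof into two parts: a combinatorial identification of $d_I$ with $\MV(A_{i_1}, \dots, A_{i_{n-1}})$, and the attainment of the BKK bound $d_I$ by the system that governs the generic linear pullback.

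For the first part, observe that the hypothesis is exactly the negation of item~\ref{lemitm:cond Alicia/Bihanc} of Lemma~\ref{lem:cond Alicia/Bihan} applied with $B_i = A_i$ and $B_i' = A_{i,I}$ for $i \in I$. Indeed, the existence of a nonempty $J \subset \mathcal{P}_F$ simultaneously forces $\mathcal{P}_F \ne \emptyset$ and, via the dimension inequality, the non-essentiality of $(A_i \cap F_i)_{i \in \mathcal{P}_F}$, precisely negating \emph{empty or essential}. Hence Lemma~\ref{lem:cond Alicia/Bihan} yields
\[\MV(A_{i_1},\dots,A_{i_{n-1}}) = \MV(A_{i_1,I},\dots,A_{i_{n-1},I}) = d_I,\]
which establishes the rightmost equality in the statement.

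For the degree equality $\deg(S)\cdot\deg(\ff)=d_I$, the upper bound is immediate from Theorem~\ref{thm:degS}. For the reverse inequality, recall from that proof that $\deg(S)\cdot\deg(\ff)$ equals the count of common zeros in $U$ of the system $h_j^I = f_{i_j} + \sum_{k \in I^c}\mu_{i_j,k}f_k = 0$ from~\eqref{eq:h_{i,I}}, whose BKK upper bound in $(\K^*)^{n-1}$ is $\MV(A_{i_j,I})= d_I$. I will verify Bernstein's genericity condition at every coherent collection $F$ of proper faces of $(A_{i,I})_{i \in I}$: the initial forms $(\init_\alpha(h_j^I))_{j\in[n-1]}$ have no common torus zero. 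For $j \in \mathcal{P}_F$, the initial form contains $\init_\alpha(f_{i_j})$ with free generic coefficients supported on $A_{i_j}\cap F_j$; for $j \notin \mathcal{P}_F$, it is a $\mu$-weighted linear combination of $\{\init_\alpha(f_k)\}_{k\in I^c}$. Exploiting the genericity of the $\mu$'s to eliminate the $\mu$-structured pieces, a hypothetical common zero $\tau$ would also annihilate $(\init_\alpha(f_{i_j}))_{j \in \mathcal{P}_F}$, contradicting Bernstein's theorem applied to these generic polynomials whose supports $(A_{i_j}\cap F_j)_{j\in \mathcal{P}_F}$ are non-essential by the hypothesis. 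Once the BKK bound $d_I$ is attained, a standard overdetermined-system argument (the system $\{h_j^I=0\}_{j} \cup \{f_k=0\}$ has $n$ equations in $n-1$ variables, generically inconsistent in the torus) shows all $d_I$ zeros lie in $U$, yielding $\deg(S)\cdot\deg(\ff) = d_I$.

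The main obstacle is the elimination of $\mu$-weighted initial forms for $j \notin \mathcal{P}_F$ and in the sub-case where $j \in \mathcal{P}_F$ but $m_\alpha(A_{i_j}) = m_\alpha(A_k)$ for some $k \in I^c$ (producing mixed contributions). One needs to show, using that the $\mu$-parameters are generic and independent across $j$, that the auxiliary constraints on $\tau$ either vanish trivially or reduce to the non-essentiality conclusion from the $j \in \mathcal{P}_F$ constraints; this amounts to a linear-algebraic check building on the combinatorial structure dictated by the hypothesis.
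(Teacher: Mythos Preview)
Your combinatorial identification $\MV(A_{i_1},\dots,A_{i_{n-1}})=d_I$ via Lemma~\ref{lem:cond Alicia/Bihan} is correct and is exactly what the paper does. The upper bound $\deg(S)\cdot\deg(\ff)\le d_I$ from Theorem~\ref{thm:degS} is also used identically.

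The difference is in the lower bound. The paper does \emph{not} attempt to verify Bernstein's non-degeneracy criterion for the system $(h_j^I)_j$. Instead it introduces the one-parameter homotopy $f_{i_j}+t\,\mu_{j}\,f_{i_0}+t\,\nu_{j}\,f_{i_n}=0$: at $t=0$ this is the generic system $(f_{i_j})_{j\in I}$ with exactly $\MV(A_{i_1},\dots,A_{i_{n-1}})$ simple torus roots, and for all but finitely many $t$ it has $\deg(S)\cdot\deg(\ff)$ torus roots. The isolated simple roots at $t=0$ persist, giving $\MV(A_{i_1},\dots,A_{i_{n-1}})\le\deg(S)\cdot\deg(\ff)$ directly, and the squeeze with $d_I$ finishes. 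This sidesteps entirely the face-by-face analysis you propose.

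Your direct Bernstein verification, by contrast, has a genuine gap at exactly the point you flag as the ``main obstacle''. For $j\in\mathcal P_F$ with $m_\alpha(A_{i_j})=m_\alpha(A_k)$ for some $k\in I^c$, the initial form $\init_\alpha(h_j^I)$ equals $\init_\alpha(f_{i_j})+\sum_k\mu_{jk}\,\init_\alpha(f_k)$, and you assert that genericity of the $\mu$'s lets you ``eliminate the $\mu$-structured pieces'' so that a common torus zero $\tau$ would annihilate the pure pieces $(\init_\alpha(f_{i_j}))_{j\in\mathcal P_F}$. But $\tau$ depends on the $\mu$'s, so you cannot vary $\mu$ while holding $\tau$ fixed; and the polynomials $\init_\alpha(f_k)$ for $k\in I^c$ are the \emph{same} across all rows $j$, so the $\mu$-structured parts are far from independent generic polynomials. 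The ``linear-algebraic check'' you allude to would have to handle the interaction between the (at most two) shared polynomials $\init_\alpha(f_k)$, the free coefficients of the $\init_\alpha(f_{i_j})$, and the non-essentiality of $(A_{i_j}\cap F_j)_{j\in J}$ simultaneously---and you have not supplied it. The paper's homotopy argument avoids this entirely, and once you have already proved $\MV(A_{i_1},\dots,A_{i_{n-1}})=d_I$, it is both shorter and complete.
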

\begin{proof}
Denote $I=\{i_1, \dots, i_{n-1}\}$ and $I^c=\{i_0,i_n\}$.
 As in the proof of Theorem~\ref{thm:degS}, take generic coefficients
$\{\mu_{j,I},\nu_{j,I}\}_{j=1}^{n-1}$ so that the system 
$h_{j,I} = f_{i_j}+\mu_{j,I}\, f_{i_0} + \nu_{j,I}\, f_{i_n}=0$ for $ j \in [n-1]$ has $\deg(S)\, \deg(\ff)$ common isolated zeros,
all of them in the open set  $\{x \in 
(\K^*)^{n-1}\ | \ \prod_{j=0}^nf_j(x)\ne 0\}$.

Consider the homotopy $\{f_{i_j}+t\, \mu_{j,I}\, f_{i_0} +
t\, \nu_{j,I}\, f_{i_n}\}_{j=1}^{n-1}$. For all but finitely many values of $t \in \K$,
the system given by these polynomials has $\deg(S)\cdot \deg(\ff)$
isolated common zeros with all non-zero coordinates, and
$\MV(A_{i_1}, \dots, A_{i_{n-1}})$ when $t=0$. Then, using
Theorem~\ref{thm:degS}, $\MV(A_{i_1}, \dots, A_{i_{n-1}})\le
\deg(S)\cdot \deg(\ff) \le d_{I} = \MV(A_{i_1}\cup A_{i_0} \cup
A_{i_n}, \dots, A_{i_{n-1}}\cup A_{i_0} \cup A_{i_n})$ common
zeros with all non-zero coordinates. By Lemma~\ref{lem:cond
Alicia/Bihan}, $\MV(A_{i_1}, \dots, A_{i_{n-1}})= d_{I}.$
\end{proof}

\begin{example} Let $d \in \NN$ be an even number. Consider the
hypersurface $S$ parameterized by the rational map $\ff$ as in~\eqref{eq:def3},
where $f_0, \dots, f_3$ are generic polynomials with two variables,
coefficients in $\K$ and supports {\small$$A_0 =
\{(0,0),(1,0),(0,1), (d/2,d/2)\} = A_3, \ A_1 = \{(0,0), (1,0),
(0,d)\}, \ A_2=\{(0,0), (d,0), (0,1)\}.$$} Then the conditions of
Theorem~\ref{thm:degH=mindI} are satisfied for $I = \{1,2\}$ and
we have that $\deg(\ff)=1$, so the degree of the hypersurface  equals
$\deg(S) = d^2 =
MV(A_0\cup A_1 \cup A_3, A_0\cup A_2 \cup A_3)$.
\end{example}

\section{Orders of rational hypersurfaces under sparse parametrizations}\label{sec:order}

In this section, we assume  that $d=n-1$ and $S$ is a hypersurface in $(\K^*)^n$.  
We will study its order at the origin $\mbox{ord}_0(S)$, which is defined as follows.
 Let $H$ be a  (reduced) polynomial such that $S=(H=0)$. Then,
$$\mbox{ord}_0(S) = \max\{m\in \ZZ_{\ge 0} \ | \ \partial^\beta(H)(0)=0,
 \text{ for all } \beta\in (\ZZ_{\ge 0})^n, |\beta| < m\}.$$
 We compute ${\rm ord}(S)$ in Theorem~\ref{thm:ord} under an additional condition on the
 family of supports.  As we remarked in the Introduction, depending 
on the relative positions of the supports $A_0,\dots,A_n$ it could happen   that ${\rm ord}(S) > 1$ 
(that is, the origin is a singular point of the closure of $S$ in $\K^n$) even for generic coefficients, 
We end with two examples where our proofs can be extended to other cases not falling into
the hypotheses of Theorem~\ref{thm:ord}.

%

Before stating Theorem~\ref{thm:ord}, we need to introduce a
definition. 

\begin{definition}\label{def:tame}
 Let $A_0, \dots, A_n$ be a family of finite sets in $(\ZZ_{\ge 0})^{n-1}$.
Given $\alpha \in \bQ^{n-1}$, we call $J_\alpha = \{ j \in [n]_0 \ | \ m_\alpha(A_j) < \max_{i \in
[n]_0}\{m_\alpha(A_i)\}\}$. We say that the family is tame if for
any $\alpha$ such that $J_\alpha \ne \emptyset$ and $|J_\alpha|\ne
n$, the family $\{\mbox{face}_\alpha(A_j)\}_{j \in J_\alpha}$ in
$(\ZZ_{\ge 0})^{n-1}$ is not essential (see
Definition~\ref{def:adapted}). 
\end{definition}
Note that in case all $A_i$ are equal for all $i \in [n]_0$ or if
all $A_i$ are equal for $i \in [n]$ and $A_0$ consists of a single
point (thus $f_i/f_0$ are Laurent polynomials with the same
support, the case studied in~\cite{SY94}), the family is always
tame.

Recall that by Theorem~\ref{thm:degS}  the degree of $S$ for generic coefficients satisfies
$\deg(S) \cdot \deg(\ff) \, =\,  \MV({A}_0^{2}, \dots, {A}_n^{2})$,     where
the  notation $A_i^2 \subset \ZZ^{n+1}$  is given in Definition~\ref{def:widehat}.

\begin{theorem}\label{thm:ord}
Let $A_0, \dots, A_n$ be a tame family of finite sets in
$(\ZZ_{\ge 0})^{n-1}$. Let $f_0,\dots, f_n$ be generic polynomials
with respective supports $A_0,\dots, A_n $ and coefficients in $\K$ such that
the map $\ff$ from~\eqref{eq:def3} is generically finite and parameterizes a 
hypersurface $S$ in $\K^n$.
Then,
$$\mbox{ord}_0(S) \cdot \deg(\ff)=  \MV({A}_0^{2}, \dots, {A}_n^{2}) -
MV({A}_1^1, \dots{A}_n^1) \ge \MV({A}_0^{2}, \dots, {A}_n^{2})
- \min_{j \in [n]}\{\MV(\{A_i\cup A_j\}_{i \in [n] \atop i
\ne j}) \}.$$
\end{theorem}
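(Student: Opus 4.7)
The plan is to compute $\mbox{ord}_0(S) \cdot \deg(\ff)$ by combining Theorem~\ref{thm:degS} with a parallel mixed-volume count for the intersection of $S$ with a generic line through the origin. The starting observation is elementary: for a generic line $L \subset \K^n$ through the origin, exactly $\deg(S) - \mbox{ord}_0(S)$ of the $\deg(S)$ intersection points of $L$ with $S$ (counted with multiplicity) lie away from $0$, the remaining $\mbox{ord}_0(S)$ being concentrated at $0$. Multiplying by $\deg(\ff)$, the number of $\sigma$ in the domain of $\ff$ with $\ff(\sigma) \in L \setminus \{0\}$ equals $(\deg(S) - \mbox{ord}_0(S)) \cdot \deg(\ff)$. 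Since $\deg(S) \cdot \deg(\ff) = \MV(A_0^2, \dots, A_n^2)$ by Theorem~\ref{thm:degS}, it suffices to identify this away-from-origin count with $\MV(A_1^1, \dots, A_n^1)$ and then to bound it above by $\min_j \MV(\{A_i\cup A_j\}_{i\ne j})$.

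A generic line through the origin is cut out by $n-1$ generic linear forms $\ell_i = \sum_{j=1}^n a_{ij} y_j$ with no constant term. Pulling back via $\ff$ and clearing denominators produces the system $\sum_{j=1}^n a_{ij} f_j(\sigma) = 0$, $i \in [n-1]$. Mimicking the row reduction and slack-variable trick of~\eqref{eq:Istar}, we rewrite this as a square system of $n$ polynomials in the $n$ variables $(x_1,\dots,x_{n-1},x_n)$,
\[
\tilde h_i = f_i + \mu_i x_n, \ i \in [n-1], \qquad \tilde h_n = f_n - x_n,
\]
whose supports are precisely the sets $A_i^1 \subset \ZZ^n$ of Definition~\ref{def:widehat}. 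For generic coefficients, Bernstein's theorem gives $\MV(A_1^1,\dots,A_n^1)$ torus solutions $(\sigma,\xi)$, each of which automatically satisfies $\xi = f_n(\sigma)\ne 0$.

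The main technical point, where the tame hypothesis enters, is to show that every such torus solution has $f_0(\sigma)\ne 0$, so that $\sigma$ lies in the domain of $\ff$ and $\ff(\sigma)$ genuinely belongs to $L\setminus\{0\}$. A spurious solution would be a common torus zero of the $n$ polynomials $f_0, f_1+\mu_1 f_n, \dots, f_{n-1}+\mu_{n-1}f_n$ in $n-1$ variables; this is an overdetermined system and, by a valuation/initial-form argument in the spirit of Lemma~\ref{lem:initialForms}, Lemma~\ref{lem:order} and the proof of Theorem~\ref{thm:tropset}, its having a generic common torus zero would produce a direction $\alpha \in \RR^{n-1}$ for which $J_\alpha$ contains $0$ and $\{\face_\alpha(A_j)\}_{j\in J_\alpha}$ admits an essential subcollection, contradicting tameness. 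The same ideas show Bernstein's bound is sharp, so the BKK count agrees with the geometric preimage count; combining with Theorem~\ref{thm:degS} we obtain $\mbox{ord}_0(S)\cdot \deg(\ff) = \MV(A_0^2,\dots,A_n^2) - \MV(A_1^1,\dots,A_n^1)$.

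For the lower bound, fix any $j\in[n]$ and, instead of introducing a slack variable, perform the row reduction that directly produces the $n-1$ equations $h_i = f_i + \mu_i f_j = 0$, $i\in [n]\setminus\{j\}$, in $n-1$ variables with supports $A_i\cup A_j$. BKK gives $(\deg S-\mbox{ord}_0 S)\cdot \deg(\ff) \le \MV(\{A_i\cup A_j\}_{i\ne j})$, equivalently $\MV(A_1^1,\dots,A_n^1)\le \MV(\{A_i\cup A_j\}_{i\ne j})$. Taking the minimum over $j$ and subtracting from $\MV(A_0^2,\dots,A_n^2)$ yields the announced lower bound on $\mbox{ord}_0(S)\cdot \deg(\ff)$. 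The hardest step is the tameness argument in the third paragraph, where the combinatorial condition is converted into the exclusion of $f_0$-vanishing branches of the auxiliary slack system.
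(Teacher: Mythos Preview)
Your overall strategy is sound and genuinely different from the paper's: you work directly over $\K$ with a line $L$ through the origin, while the paper perturbs the line to $\ell_i=a_{i,0}\varepsilon$ and works over the Puiseux field $\KK$, separating the $\deg(S)$ intersection points by the valuation of the parameter $\lambda_k$ along $L$. Your approach is more elementary and avoids Hensel lifting and the machinery of \S\ref{sec:TropVar}.

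However, your identification of where tameness enters is wrong, and this is a real gap. In the paper, tameness is used for the following: given a solution $\sigma_k\in(\KK^*)^{n-1}$ with $\val(\lambda_k)=0$, one has $\val(f_i(\sigma_k))$ equal for all $i\in[n]_0$; if $\val(\sigma_k)=\alpha\neq 0$ this forces $\init_\alpha(f_j)(b_k)=0$ exactly for $j\in J_\alpha$, and tameness (together with a dimension argument for the case $J_\alpha=\emptyset$) excludes such $\alpha$. This is what pins down $\val(\sigma_k)=0$ so that $b_k=\init(\sigma_k)$ is an honest $\K$-solution of $g_1=\dots=g_{n-1}=0$, and Hensel then makes the correspondence bijective. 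There is no analogue of this issue in your approach, since you never leave $\K$.

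What you actually need in your argument is that no torus solution $\sigma$ of $g_1=\dots=g_{n-1}=0$ satisfies $f_0(\sigma)=0$. But that follows from genericity of the line, not tameness: the map $\Phi\colon\sigma\mapsto[f_1(\sigma):\cdots:f_n(\sigma)]$ sends the $(n-2)$-dimensional locus $\{f_0=0\}$ to a subset of $\mathbb P^{n-1}$ of dimension at most $n-2$, which a generic $[L]$ avoids. Your proposed ``valuation/initial-form argument producing $0\in J_\alpha$'' does not do this job, and conflates the Puiseux-series picture with the $\K$-picture. If you correct this step with the dimension count above, your argument appears to go through and in fact does not use the tame hypothesis at all---so either you have a stronger statement than the paper's, or there is a further subtlety (e.g.\ in the Bertini-type claims that generic $L$ through $0$ meets $S\setminus\{0\}$ transversely inside $\ff(U)$ with all fibres of size $\deg(\ff)$) that you should make explicit.
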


\begin{proof}
Consider $\ell_i=a_{i,1} y_1 + \cdots + a_{i,n}y_n$ generic linear
forms with coefficients in $\K$, and generic constants $a_{i,0}\in
\K^*$ for all $i \in [n-1]$. As before, consider the open set $U= \{x
\in (\KK^*)^{n-1}\ | \ \prod_{j=0}^nf_j(x)\ne 0\}$. We can assume
that $H,\ell_1-a_{1,0}\varepsilon, \dots,
\ell_{n-1}-a_{n-1,0}\varepsilon$ have $D=\MV({A}_0^{2}, \dots, {A}_n^{2})/\deg(\ff)$
 common roots, and that they all lie in $\ff_\KK(U)$.

To compute the order at the origin of $H$, we will consider the
line in $\KK^n$ defined by the generic affine linear forms
$\ell_1-a_{1,0}\varepsilon, \dots,
\ell_{n-1}-a_{n-1,0}\varepsilon$. Notice that this line can be
parameterized as
$$\{x\in\KK^n \ | \ \ell_1(x) = a_{1,0}\varepsilon, \dots,
\ell_{n-1}(x) = a_{n-1,0}\varepsilon\}=\{\lambda\, u + \varepsilon
\,v \ | \ \lambda \in \KK\},$$ with $u, v\in \K^n$. The common
zeros of the polynomials $H,\ell_1-a_{1,0}\varepsilon, \dots,
\ell_{n-1}-a_{n-1,0}\varepsilon$ in $ \ff_{\KK}(U)$ are exactly
the points
$$\omega_k=\left(
  \frac{f_1(\sigma_k(\varepsilon))}{f_0(\sigma_k(\varepsilon))},
  \dots,
  \frac{f_n(\sigma_k(\varepsilon))}{f_0(\sigma_k(\varepsilon))}\right) =
  \lambda_k\, u + v\,\varepsilon , \quad   k \in [D],  $$
for which $H(\lambda_k\, u + v\,\varepsilon)=0$. If we consider
the polynomial $H(\lambda\, U + \varepsilon \,V)\in
\K[U,V,\varepsilon][\lambda]$, we have
\begin{equation}\label{eq:H}
 H(\lambda\, U + \varepsilon \,V)=H_{0}(U) \lambda^D +
 H_1(U,V,\varepsilon) \lambda^{D-1} + \cdots +
 H_D(V,\varepsilon),
\end{equation}
where for all $j\in [D]_0$, $H_j(U,V, \varepsilon)$ is a
polynomial of degree  at most $j$ in $\varepsilon$.

By genericity of $\ell_i$ and $a_{i,0}$, we can assume that $u,
v\in (\K^*)^n$, $H_{0}(u)\neq 0$ and $H_D(v,\varepsilon) =
H(v\,\varepsilon) \neq 0$. Note that $\val(H_0(u))=0$ and
$\val(H_j(u,v,\varepsilon))\ge 0$ for all $ j \in [D-1]_0$.
Hence, if we consider the coefficients of $H(u\lambda +
v\varepsilon) = \sum_{i=0}^DH_{D-i}(u,v,\varepsilon)\lambda^i\in
\KK[\lambda]$ as elementary symmetric functions of the roots
$\{\lambda_k\}_{k=1}^D\subset \KK$, we can easily see that
$\val(\lambda_k) \ge 0$ for all $k \in [D]$. Indeed, assume
that $\val(\lambda_k) < 0$ for some $k$ and consider the non-empty
set $J=\{k \in [D] \ | \ \val(\lambda_k)<0\}$. Then,
the elementary symmetric function $H_{\#J}(u,v,\varepsilon)$ has
the same negative valuation as $H_0(u)\prod_{k \in J}\lambda_k$,
which is a contradiction.
Moreover, the minimum power of $\lambda$ with a coefficient of
valuation $0$ is $D - \#\{k \ | \ \val(\lambda_k)=0\}.$
On the other hand, if we now consider $H$ as a sum of homogeneous
terms of degree $\mbox{ord}_0(S)$ to $D$ and we evaluate it at
$u\lambda + v\varepsilon$, the minimum power of $\lambda$ with a
coefficient of valuation $0$ is $\mbox{ord}_0(S)$. Therefore,
\begin{equation}\label{eq:resta}
\mbox{ord}_0(S) = D - \#\{k \ | \ \val(\lambda_k)=0\}.
\end{equation}

We will focus in what follows on bounding the number of points
$\{\omega_k\}_{k=1}^D$ for which $\val(\lambda_k)=0$. Let
$\omega_{k}=\left(\frac{f_1(\sigma_{k})}{f_0(\sigma_{k})}, \dots,
\frac{f_n(\sigma_{k})}{f_0(\sigma_{k})}\right)$ be one of them.
Since $m = \val(f_i(\sigma_k))$ is fixed for all $i \in [n]_0$ and
the supports are tame, let us prove that this implies that if
$\sigma_{k}= b_{k}\varepsilon^\alpha + \hot(\varepsilon)$ with
$b_{k} \in (\K^*)^n$, then necessarily $\alpha=0$.
Assume first that $\alpha\neq 0$ and recall the notation
in~\eqref{eq:4.1}: $m_{\alpha}(A_j)=\min_{p\in A_j}\<\alpha,p\>$.
Since $\init_\alpha(f_1), \dots, \init_\alpha(f_n)$ are generic,
they do not all vanish on $b_k$. Also, $m = m_\alpha(A_j)$ if and
only if $\init_{\alpha}(f_j)(b_k) \ne 0 $ for all $ j \in [n]_0.$
However, since the supports are tame, $\{\init_{\alpha}(f_j)\}_{j
\in J_\alpha}$ can only have common zeros with all non-zero
coordinates when $J_\alpha = \emptyset$, and so
$\init_{\alpha}(f_j)(b_k) \ne 0 $ for all $ j \in [n]_0.$ As
$a_{i,1}f_1(\sigma_k) + \dots + a_{i,n}f_n(\sigma_k) -
a_{i,0}f_0(\sigma_k)\varepsilon = 0 \mbox{ for all } i \in [n-1]$, 
$b_k\in (\K^*)^{n-1}$ is a common zero of the polynomials
$\ell_i(\init_{\alpha}(f_1(x)), \dots, \init_{\alpha}(f_n(x)))\in
\K[x_1, \dots, x_{n-1}]$  for all  $i\in [n-1]$.
The convex hull of the support of any linear combination of
$\init_{\alpha}(f_1(x)),$ $ \dots, \init_{\alpha}(f_n(x))$ has
dimension at most $n-2$ because all supports are contained in
the hyperplane with equation $\langle \alpha, p \rangle =m$.
Then, the variety $W_\alpha \subset (\K^*)^n$ parameterized for all $x \in U$ by $(\init_{\alpha}(f_1(x)),
\dots, \init_{\alpha}(f_n(x))$ has dimension at most $n-2$. 
The space of lines from  a point in $W_\alpha$ through the origin
has then codimension at least one in the space of lines in $\K^n$
through the origin. Thus, the intersection of $W_\alpha$
with a generic line  $\ell_1=0, \dots,
\ell_{n-1}=0$ is empty. Therefore, $\alpha = 0$.

Consider then $\sigma_{k}= b_{k} + \hot(\varepsilon)$ with $b_{k}
\in (\K^*)^{n-1}$. 
The polynomials 
$$g_i =\ell_i(f_1(x), \dots, f_n(x)), \quad i\in [n-1],$$
have finitely many common zeros in $(\K^*)^{n-1}$, $b_k$ being
one of those zeros. As in the proof of Theorem~\ref{thm:degS}, by taking
linear combinations, we can tranform them  into
polynomials of the form $f_1 +\mu_1 f_n, \dots,  f_{n-1} + \mu_{n-1} f_n$  with generic
coefficients $\mu_1, \dots, \mu_{n-1}$, and thus its
number of common zeros  is bounded above by $\min_{j \in [n]}\{\MV(\{A_i\cup
A_j\}_{i \ne j,0})\}$. Moreover, one can take an
extra variable $x_n$ and define an associated generic system
${h}_{1} = \dots = h_n= 0$  where
$${h}_i = f_i + \mu_i \, x_n\in \K[x_1, \dots, x_n] \mbox{ for all } i \in [n-1], \mbox{ and } {h}_n = f_n - x_n.$$
By the genericity of their coefficients, $f_1, \dots, f_n$ do not have common zeros, and
hence common zeros of $g_1, \dots, g_{n-1}$ 
in $(\K^*)^{n-1}$ correspond to common zeros of the polynomials $h_1, \dots, h_n$
in $(\K^*)^n$.
By Bernstein's Theorem, the number of common roots of $g_1, \dots g_{n-1}$ equals
$\MV(\{{A}^1_i\}\}_{i =1}^n)$.
Since $U$ is smooth, \cite[Corollary 6.7.2]{Jouanolou83} ensures
that the Jacobian matrix of $g_1, \dots, g_{n-1}$ has nonzero
determinant when evaluated at a common zero $b\in U \subset
(\K^*)^{n-1}$. This implies that the Jacobian matrix of
$\ell_i(f_1(x), \dots, f_n(x))- a_{i,0}f_0(x)\varepsilon$ has
valuation zero when evaluated at the common roots $b\in
(\K^*)^{n-1}$ of $g_1, \dots, g_{n-1}$ and, by Hensel's Lemma,
each of these common zeros $b$ can be lifted to a unique
common zero  $\sigma$ of $a_{i,1}f_1 + \dots + a_{i,n}f_n -
a_{i,0}f_0\varepsilon$, for all $i \in [n-1]$.
We deduce that the number of $\lambda_k$ with $\val(\lambda_k) =
0$ is $\deg(\ff)^{-1}\cdot\MV(\{{A}^1_i\}\}_{i =1}^n)$ and the result follows from
\eqref{eq:resta}.
\end{proof}

The following proposition
presents a sufficient condition for the inequality in the statement
Theorem~\ref{thm:ord} to be an equality and the order can be
computed in an easier way.

\begin{proposition}\label{prop:valuation} Under the hypotheses and notation
of Theorem~\ref{thm:ord}, if 
there exists $j_0 \in [n]$ such that for all coherent
collection of proper faces $F$ of $(A_i\cup A_{j_0})_{i\neq j_0}$,
there exists a nonempty subset $J$ of 
$\mathcal{P}_F = \{i\in [n]\backslash \{ j_0 \} \ | \ A_i\cap F_i \ne \emptyset\}$
such that $\mbox{dim}(\sum_{i\in J}A_i\cap F_i)<|J|$, then
\begin{equation}\label{eq:diff}
{\rm ord}_0(S) \cdot \deg(\ff) = \MV({A}_0^{2}, \dots, {A}_n^{2}) -
\MV(A_1, \dots, A_{j_0-1}, A_{j_0+1}, \dots, A_n ).
\end{equation}
In particular,
\begin{equation}\label{eq:3}
MV({A}_1^1, \dots{A}_n^1) =  \MV(\{A_i\cup A_{j_0}\}_{i \in [n] \atop i \neq j_0}) =  \MV(A_1, \dots, A_{j_0-1}, A_{j_0+1}, \dots, A_n).
\end{equation}
\end{proposition}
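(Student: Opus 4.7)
The plan is to start from the identity
\[
\mbox{ord}_0(S)\cdot \deg(\ff) \,=\, \MV(A_0^2,\dots, A_n^2)\,-\,\MV(A_1^1,\dots, A_n^1),
\]
which is established in the proof of Theorem~\ref{thm:ord}, and then to massage the second mixed volume into the one appearing on the right of~\eqref{eq:diff}. The bulk of the work is in verifying the two equalities in~\eqref{eq:3}, which I will address in two independent steps: the first holds for any $j_0\in [n]$, while the second crucially uses the hypothesis on $j_0$.

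First I will prove that $\MV(A_1^1,\dots, A_n^1) = \MV(\{A_i\cup A_{j_0}\}_{i\in [n]\setminus\{j_0\}})$, the first equality in~\eqref{eq:3}. This is a standard elimination argument: a generic system with supports $A_1^1,\dots, A_n^1$ has the form $f_i(x_1,\dots, x_{n-1}) + \mu_i\,x_n = 0$ for $i\in [n]$, with $\mu_i\in \K^*$ generic. Eliminating $x_n$ via $x_n = -f_{j_0}/\mu_{j_0}$ yields the $n-1$ equations $\mu_{j_0} f_i - \mu_i f_{j_0}=0$, $i\neq j_0$, with supports $A_i\cup A_{j_0}$. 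The projection $(x_1,\dots, x_n)\mapsto (x_1,\dots, x_{n-1})$ is a bijection on torus zeros (its inverse sets $x_n$ via the formula above, which is nonzero because generically $f_{j_0}$ does not vanish at any zero of the reduced system), and Bernstein's theorem applied to both sides yields the equality.

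Next, the remaining equality $\MV(\{A_i\cup A_{j_0}\}_{i\neq j_0})= \MV(A_1, \dots, A_{j_0-1}, A_{j_0+1}, \dots, A_n)$ is where the hypothesis on $j_0$ enters. I will apply Lemma~\ref{lem:cond Alicia/Bihan} with $B_i = A_i$ and $B_i'= A_i\cup A_{j_0}$ for $i\neq j_0$: its condition~\ref{lemitm:cond Alicia/Bihanc} for strict inequality reads that there exists a coherent collection $F$ of proper faces of the $B_i'$ such that $\{A_i\cap F_i\}_{i\in \mathcal{P}_F}$ is either empty or essential. The hypothesis of the proposition is exactly the negation of this statement---for every such $F$ the set $\mathcal{P}_F$ is nonempty and some subset $J\subset \mathcal{P}_F$ exhibits the failure of essentiality $\dim(\sum_{i\in J}A_i\cap F_i)<|J|$---so strict inequality fails and equality holds. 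Combining the two equalities with the initial identity gives~\eqref{eq:diff}, and the chain~\eqref{eq:3} is obtained along the way.

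The main obstacle I anticipate is carrying out Step~1 cleanly: while the elimination is morally obvious, one has to justify that for generic data every torus solution of the reduced system satisfies $f_{j_0}\neq 0$, so that the back-substitution lands in $(\K^*)^n$ and Bernstein's bound is attained on both sides. This is because the augmented system $\{\mu_{j_0} f_i-\mu_i f_{j_0}=0\}_{i\neq j_0}\cup\{f_{j_0}=0\}$ is over-determined ($n$ equations in $n-1$ variables) with generically empty common zero locus in $(\K^*)^{n-1}$.
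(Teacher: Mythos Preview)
Your Step~2 is correct and is exactly how the paper uses Lemma~\ref{lem:cond Alicia/Bihan}. The gap is in Step~1. The elimination bijection is fine and shows that the reduced system $\mu_{j_0}f_i-\mu_i f_{j_0}=0$ ($i\neq j_0$) has exactly $\MV(A_1^1,\dots,A_n^1)$ torus zeros. But these $n-1$ polynomials do \emph{not} have generic coefficients with respect to the supports $A_i\cup A_{j_0}$: they all share the same $f_{j_0}$, only scaled by different $\mu_i$. So Bernstein applied to the reduced side gives only the upper bound
\[
\MV(A_1^1,\dots,A_n^1)\ \le\ \MV(\{A_i\cup A_{j_0}\}_{i\neq j_0}),
\]
not the equality you claim. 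In fact your assertion that ``the first [equality] holds for any $j_0\in[n]$'' is false in general: it would force all the numbers $\MV(\{A_i\cup A_{j_0}\}_{i\neq j_0})$ to coincide, which already fails for $n=3$ with $A_1=\{0,e_1\}$, $A_2=\{0,e_2\}$, $A_3=\{0,2e_1,2e_2\}$ (the values for $j_0=1$ and $j_0=3$ are $2$ and $4$, while $\MV(A_1^1,A_2^1,A_3^1)=2$).

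What is missing is the reverse inequality $\MV(A_1^1,\dots,A_n^1)\ge \MV_{j_0}:=\MV(A_1,\dots,\widehat{A_{j_0}},\dots,A_n)$, after which your Step~2 closes the sandwich $\MV_{j_0}\le \MV(A_1^1,\dots,A_n^1)\le \MV(\{A_i\cup A_{j_0}\}_{i\neq j_0})=\MV_{j_0}$ and all three equalities in~\eqref{eq:3} follow at once. The paper obtains this lower bound by a homotopy: deform $f_i+t\mu_i f_{j_0}=0$ from $t$ generic (same zero count as your reduced system) to $t=0$ (the system $\{f_i=0\}_{i\neq j_0}$ with $\MV_{j_0}$ isolated zeros), and use that isolated zeros persist. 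Alternatively, one can get it combinatorially by monotonicity of the mixed volume, replacing $A_i^1$ with $A_i\times\{0\}$ for $i\neq j_0$ and using the reduction formula for a mixed volume with $n-1$ arguments in the hyperplane $\{x_n=0\}$. Either way, this extra inequality is essential and cannot be skipped.
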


\begin{proof} 
Denote $\MV_{j_0}= \MV(A_1, \dots, A_{j_0-1}, A_{j_0+1}, \dots, A_n)$.
As the polynomials $f_0, \dots, f_n$ are generic with respect to their supports, we have by Theorem~\ref{thm:degS} that $\deg(S) \cdot \deg(\ff) = 
\MV({A}_0^{2}, \dots, {A}_n^{2})$. 
Because the supports are tame, we saw in the proof of Theorem \ref{thm:ord}  that 
$ord_0(S)\cdot \deg(\ff) = \MV({A}_0^{2}, \dots, {A}_n^{2}) - \#\{k \ | \ \val(\lambda_k)=0\}\cdot \deg(\ff)$ and that
we can compute $N = \#\{k \ | \ \val(\lambda_k)=0\}\cdot \deg(\ff)$
as the number of common zeros in $(\K^*)^{n-1}$ of $\ell_i(f_1(x), \dots, f_n(x))$ for $i\in[n-1]$.
Taking linear combinations, this system is equivalent to 
$f_i + \mu_if_{j_0}=0 \ \mbox{ for all } i \in [n], \ i \neq j_0$.
Thus, $N$ is bounded above by $\MV(\{A_i\cup A_{j_0}\}_{i \in [n] \atop i \neq j_0})$.
Now, as in the proof of Theorem~\ref{thm:degH=mindI}, we introduce a new variable $t$ and consider the homotopy:
\begin{equation}\label{eq:homotOrd} 
f_i + t\mu_if_{j_0}=0 \ \mbox{ for all } i \in [n], \ i \neq j_0.
\end{equation}
For almost all $t$, its number of common zeros in $(\K^*)^{n-1}$ is $N$.
As $f_1, \dots, f_n$ are generic polynomials, they have $MV(A_1, \dots, A_{j_0-1}, A_{j_0+1}, \dots, A_n)$ common zeros with all non-zero coordinates. { Since these zeros are isolated, each is in a germ of curve of zeros of \eqref{eq:homotOrd} not contained in $\{t=0\}$.} This implies that $MV(A_1, \dots, A_{j_0-1}, A_{j_0+1}, \dots, A_n) \leq N$.
Therefore,
\[ \MV_{j_0} \le N \le \MV(A_1\cup A_{j_0}, \dots, A_{j_0-1}\cup A_{j_0},
A_{j_0+1}\cup A_{j_0}, \dots, A_n\cup A_{j_0}) .\]
But now,   it follows from Lemma~\ref{lem:cond Alicia/Bihan} that $\MV(A_1,\dots, A_{j_0-1},A_{j_0+1},\dots, A_n) = \MV_{j_0}$,
and hence 
${\rm ord}_0(S) \, \deg(\ff) = MV(A_0^2, \dots, A_n^2) - MV_{j_0}$, as claimed. The remaining equality in~\eqref{eq:3} follows
from the computation of ${\rm ord}_0(S)$ in the statement of Theorem~\ref{thm:ord}.
\end{proof}

\begin{example}\label{ex:genDegree&Order} Consider again the application
$\mathbf{f}$ given by the generic polynomials from
Example~\ref{ex:generic}. 
Since $f_0, f_1, f_2, f_3$ are generic, the conditions
 in Proposition~\ref{prop:valuation} are realized for any
$j_0 \in [3]$ and we deduce that ${\rm ord}_0(S)=4$, as
we mentioned, because $\deg(\ff)=1$.
\end{example}

\begin{remark} While the hypothesis in the statement of Theorem~\ref{thm:ord}
 that the supports are tame is
sufficient, it is not always necessary. For example, consider the
family of lattice sets {\small $A_0 = \{(0,0,0)\}$, $A_1 = \{(2,2,0),
(1,1,1)\}$, $A_2 = \{(1,1,0), (1,2,1)\}$, $A_3 = \{(2,1,0),(2,0,1)\}$,
$A_4 = \{(2,1,0),(1,0,0)\}$}. It is easy to check that this family of supports is not tame: for $\alpha =
(-1,1,1)$, the set $J_\alpha$ from Definition \ref{def:tame} is $J_\alpha = \{3,4\}$. However, the family $\{\mbox{face}_\alpha(A_3), \mbox{face}_\alpha(A_4)\}$ is essential.
Even in this case, for generic $f_0, f_1,
f_2, f_3, f_4$ with those supports, we still have (as in the statement of Theorem  \ref{thm:ord}) that
$$\mbox{ord}_0(S) \cdot \deg(\ff) =  \MV({A}_0^{2}, \dots, {A}_4^{2}) -
MV({A}_1^1, \dots{A}_4^1) \ge \MV({A}_0^{2}, \dots, {A}_4^{2})
- \min_{j \in [4]}\{\MV(\{A_i\cup A_j\}_{i \in [4] \atop i
\ne j}) \}.$$ Moreover, the inequality above is an equality. It can be computed  that for generic polynomials, 
$\deg(\ff)=1$, $\deg(S)=9$,
${\rm ord}_0(S)=4$, and
$\MV(\{{A}^1_i\}_{i=1}^4) = \min_{i \in [4]}\{\MV(\{A_j \cup A_i\}_{j \ne i})\} =
5$.
\end{remark}

\begin{example}
Consider again Example~\ref{ex:nongeneric} with non-generic coefficients. 
As in Example~\ref{ex:genDegree&Order}, by applying Bernstein's Theorem
to the polynomial systems in the proof of Theorem~\ref{thm:degS},
we can see that all inequalities in
Theorem~\ref{thm:degS} 
are actually equalities and the degree of the surface
$S$ is $9 =
\vol(\conv(\cup_{j=0}^3A_j))$.
The polytope $\conv(\cup_{j=1}^{3} A_j)$ with vertices
$(2,0),(0,2),(3,0),(0,3)\in (\ZZ_{\ge 0})^2$ has volume
$\vol(\conv(\cup_{j=1}^{3} A_j)) = 5$. Theorem~\ref{thm:ord}
predicts the lower bound $4$ for generic systems with those
supports, which is also true in this case. To see that, we can
follow the steps in its proof, and first verify that there are no
$\sigma_k=b_k\varepsilon^\alpha + \mbox{h.o.t.}(\varepsilon)\in
\CC\{\{\varepsilon^\RR\}\}$ with $\alpha \ne 0$ and
$\val(\lambda_k) = 0$. Then, we can check that the polynomials
$\ell_i(f_1,f_2,f_3)$ for $i\in \{1,2\}$ have exactly $5$ common
zeros in $(\K^*)^2$ by means of the genericity conditions in
Bernstein's Theorem. Finally, applying \cite[Corollary
6.7.2]{Jouanolou83} and Hensel's Lemma, the order at the origin is
exactly $9-5=4$. 
\end{example}

\end{document}